\documentclass[final,11pt]{article}
\usepackage[utf8]{inputenc}
\usepackage{anysize}
\marginsize{3cm}{3cm}{2cm}{2cm} 

\usepackage{fancyhdr} 
\usepackage{lipsum} 
\usepackage{color} 
\usepackage{bm} 
\usepackage{amsfonts,amsthm,amsmath,amssymb} 
\usepackage{latexsym,graphicx,epstopdf} 
\usepackage{hyperref}
\usepackage{etoolbox} 
\usepackage[left,mathlines]{lineno}

\renewcommand{\a}{{\bf a}}
\renewcommand{\b}{{\bf b}}

\newcommand{\f}{{\bf f}}
\newcommand{\g}{{\bf g}}

\newcommand{\n}{{\bf n}}

\renewcommand{\r}{{\bf r}}
\newcommand{\s}{{\bf s}}
\renewcommand{\t}{{\bf t}}
\renewcommand{\u}{{\bf u}}
\renewcommand{\v}{{\bf v}}

\newcommand{\x}{{\bf x}}

\newcommand{\zero}{{\bf 0}}


\newcommand{\bb}{{\bf B}}

\newcommand{\hh}{{\bf H}}
\newcommand{\ii}{{\bf I}}

\renewcommand{\ll}{{\bf L}}
\newcommand{\mm}{{\bf M}}

\newcommand{\rr}{{\bf R}}

\renewcommand{\tt}{{\bf T}}

\newcommand{\ww}{{\bf W}}


\newcommand{\ccc}{\mathbb{C}}

\newcommand{\hhh}{\mathbb{H}}

\renewcommand{\lll}{\mathbb{L}}

\newcommand{\nnn}{\mathbb{N}}

\newcommand{\rrr}{\mathbb{R}}


\newcommand{\btau}{{\bm\tau}}

\newcommand{\bsigma}{{\bm\sigma}}
\newcommand{\bepsilon}{{\bm\epsilon}}

\renewcommand{\div}{{\rm div}}
\newcommand{\bfdiv}{{\bf div}}
\newcommand{\curl}{{\rm curl}}

\newcommand{\tr}{{\rm tr}}
\newcommand{\transpose}{\texttt{t}}

\newcommand{\gen}{{\rm span}}

\newtheorem{theorem}{Theorem}
\newtheorem{definition}{Definition}


\title{On Korn's inequality and the Jones eigenproblem on Lipschitz domains\thanks{This work was partially 
supported by CONICYT-Chile, through Becas Chile, and NSERC
through the Discovery program of Canada.}}

\author{Sebasti\'an Dom\'inguez\thanks{Department of Mathematics, Simon Fraser University, Burnaby, BC, 
Canada}{\,\,\,\thanks{Corresponding author: \href{mailto:domingue@sfu.ca}{domingue@sfu.ca}}}
\quad Nilima Nigam\footnotemark[2]} 

\pagestyle{fancy} 
\fancyhf{} 
\rhead{{\it Dom\'inguez et al.}} 
\lhead{Korn's inequality and Jones eigenpairs} 
\rfoot{\centering\thepage} 

\begin{document}

\maketitle

\begin{abstract}

In this paper we show that  Korn's inequality \cite{ref:korn1906} holds for vector fields with a 
zero 
normal  or tangential trace on a subset (of positive measure) of the boundary of Lipschitz domains. We further show that the validity of this 
inequality depends on the geometry of this subset of the boundary. 
We then consider the {\it Jones eigenvalue problem} which consists of  the usual traction eigenvalue problem for the Lam\'e operator for linear elasticity coupled with 
a zero normal trace of the displacement on a non-empty part of the boundary.
Here we extend the theoretical results in \cite{ref:bauer2016-1,ref:bauer2016-2,ref:dominguez2019} to show the Jones eigenpairs  exist on a broad variety of domains even when the normal trace of the displacement is constrained only on a subset of the boundary. We further show that one can have eigenpairs of a modified eigenproblem in which the constraint on the normal trace is replaced by one on the tangential trace.



\end{abstract}


{\bf Keywords}: Korn's inequality, linear elasticity, Jones eigenvalue problem
\vspace{.25cm}

{\bf AMS subject classifications}: 47A75, 74B05, 74F10

\section{Introduction}\label{section:intro}
 Korn's inequality was first introduced in a pioneering work by Arthur Korn in 1906 \cite{ref:korn1906}. For an open and bounded domain $\Omega$ of $\rrr^n$, $n\geq 2$, A. Korn showed the existence of a positive constant $C>0$ such that

\begin{align}
    \|\nabla\u\|_{0,\Omega} \leq C\|\bepsilon(\u)\|_{0,\Omega},\label{eq:introfirstkorn}
\end{align}
for any vector field $\u:=(u_1,\ldots,u_n)^\transpose$ in $[H^1(\Omega)]^n$ subject to a zero boundary condition along the boundary of $\Omega$. The space $[H^1(\Omega)]^n$ denotes the vector version of the usual Hilbert space $H^1(\Omega)$ for functions in $L^2(\Omega)$ such that each first order derivative belongs to $L^2(\Omega)$, and $\|\cdot\|_{0,\Omega}$ being the usual $L^2$-norm applied to vector or tensor fields. Here $\bepsilon(\u)$ is the strain tensor or the symmetric part of the tensor $\nabla\u$. This inequality is usually referred to as the {\it Korn's first inequality}. In a second publication \cite{ref:korn1909}, A. Korn proved that the inequality in \autoref{eq:introfirstkorn} also holds for vector fields $\u := (u_1,u_2)^\transpose$ in $[H^1(\Omega)]^2$ satisfying the free-rotation condition

\begin{align*}
    \int_\Omega \left(\frac{\partial u_1}{\partial x_2} - \frac{\partial u_2}{\partial x_1}\right) = 0.
\end{align*}
This version of \autoref{eq:introfirstkorn} is known as  {\it Korn's second inequality}.

Note that \autoref{eq:introfirstkorn} cannot hold for arbitrary vector field in $[H^1(\Omega)]^n$. The inequality is violated for the so-called {\it rigid motions}, which are vector fields with strain-free energy. Indeed, one can see that $\bepsilon(\cdot)$ defines a linear and bounded operator in $[H^1(\Omega)]^n$ whose kernel exactly coincides with the space of all rigid motions. We then see that the zero boundary condition or the rotation free condition above are simply two different ways of avoiding these rigid motions. This motivates us to think about other ways of constraining  vector fields in $[H^1(\Omega)]^n$ while still satisfying Korn's inequality in \autoref{eq:introfirstkorn} with a finite constant. For  
example, if tangential or normal components of the vector fields are zero on the boundary of the domain, then certain domains still support rigid motions. In \cite{ref:desvillettes2002}, it was proven 
that the Korn's inequality in \autoref{eq:introfirstkorn} holds for $C^2$ non-axisymmetric domains when a vanishing normal trace of the vector field is assumed on the boundary. Later, authors in \cite{ref:bauer2016-2} 
extended this result for non-axisymmetric Lipschitz domains and additionally proved that the same inequality holds (perhaps with a different constant) when the tangential trace of the vector fields is zero along the boundary. In this case however, the shape of the boundary does not need to be constrained.

 In the present work we show that the Korn's inequality 
in \autoref{eq:introfirstkorn} remains valid even when the normal trace or tangential trace of smooth enough vector fields vanish only on a subset of the boundary with positive $(n-1)$-dimensional measure. Specifically, we show the existence of a constant $c_\Sigma>0$ such that

\begin{align*}
    \|\u\|_{0,\Omega} + \|\nabla\u\|_{0,\Omega} \leq\, c_\Sigma\|\bepsilon(\u)\|_{0,\Omega},
\end{align*}
for vector fields $\u$ in $[H^1(\Omega)]^n$. However, as shall be seen, there are many cases to watch out for to prevent
rigid motions: flat faces can support orthogonal translations which form part of the kernel of the strain tensor. As shown in 
\cite{ref:bauer2016-2} this is not the case when the normal or tangential traces are zero on the entire boundary. Only 
rotations are part of the null space of the strain tensor whenever the zero normal trace is placed on the boundary of an 
axisymmetric Lipschitz domain. In constrast the strain tensor becomes injective if the zero tangential trace is put on the 
boundary of a Lipschitz domain, with no extra assumptions on the shape of the boundary.

We are also interested in studying the following eigenvalue problem: find displacements $\u$ of an isotropic elastic body $\Omega$ of $\rrr^n$, $n\geq 2$, with Lipschitz boundary $\partial\Omega$ and frequencies $\omega\in\ccc$ satisfying the eigenproblem:

\begin{subequations}\label{eq:introjones}
\begin{align}
    \bsigma(\u) := 2\mu\bepsilon(\u) + \lambda\,\tr(\bepsilon(\u))\ii\quad\text{in $\Omega$},\label{eq:introjones1}\\ -\bfdiv\bsigma(\u) = \rho \omega^2 \u\quad\text{in $\Omega$},\label{eq:introjones2}\\
    \bsigma(\u)\n = \zero,\,\,\u\cdot\n = 0\quad\text{on $\partial\Omega$}.\label{eq:introjones3}
\end{align}
\end{subequations}
Here $\mu$ and $\lambda$ are the usual Lam\'e parameters, $\rho>0$ is the density of the material in $\Omega$, $\bsigma(\u)$ is the Cauchy tensor and $\n$ stands for the outward normal unit vector on $\partial\Omega$. Eigenpairs (respectively eigenvalues or eigenfunctions) solving solving this problem are called {\it Jones eigenpairs} (respectively eigenvalues or eigenfunctions).

The eigenproblem defined by \autoref{eq:introjones} is known as {\it the Jones eigenvalue problem}, first introduced by D.S. Jones in \cite{ref:jones1983}. Here the author considered a fluid-structure interaction problem where a bounded and isotropic elastic body is immersed in an unbounded inviscid compressible fluid. Time-harmonic waves in the fluid are scattered by the elastic obstacle; the solution to this transmission problem is unique apart from the eigenpairs of the Jones eigenproblem.

Note that \autoref{eq:introjones2} together with the traction free condition in \autoref{eq:introjones3} constitute the usually accepted formulation of the eigenvalue problem for the Lam\'e operator with Neumann boundary conditions. It is well known that this problem has a countable set of eigenpairs (see, e.g. \cite{ref:babuskaosborn1991} for a 2D example). We remark that the existence of eigenpairs is independent of the domain shape in the sense that rigid motions are eigenfunctions associated with the eigenvalue zero as long as the problem in \autoref{eq:introjones} is well-defined. This is not the case for the Jones eigenproblem: the extra constraint on the normal trace of the displacement imposes geometrical conditions which may play an important role in the existence of eigenpairs on some domains. Indeed, the author in \cite{ref:harge1990} was able to exhibit that the eigenpairs of \autoref{eq:introjones} do not exist for most $C^\infty$ domains in 3D. However, it is not difficult to check that a 2D rotation satisfies the Jones eigenproblem with $\omega^2 = 0$ as eigenvalue (see \autoref{fig:solutionsball}) whenever $\Omega$ is a circle or its complement. This is also true for the sphere in 3D where rotations around the three directions $x_1$, $x_2$ and $x_3$ are eigenvectors associated with the eigenvalue $\omega^2 = 0$. These simple examples exhibit a strong connection between the shape and properties of the domain $\Omega$ and the existence of a spectrum for this problem.

It has been recently shown in \cite{ref:dominguez2019} that eigenpairs of \autoref{eq:introjones} do exist on general Lipschitz domains in 2D and 3D. It was also proven that the spectrum of this problem depends on the geometry of the domain: for an is an axisymmetric domain the eigenvalues are non-negative with rotations as  eigenvectors associated with $w = 0$; for an unbounded domain with at least two parallel faces as part of its boundary, its eigenvalues are non-negative and translations conform the eigenspace of $w = 0$; for general non-axisymmetric and bounded Lipschitz domains, the eigenvalues are strictly positive. In this paper, we are able to find eigenpairs for a weaker problem: one has existence of Jones eigenpairs if one puts the condition $\u\cdot \n = 0$ only on a non-empty part of the boundary with $(n-1)$-dimensional measure $\Sigma\subseteq\partial\Omega$. Although the geometrical properties of $\Sigma$ change in this case, we see that the zero eigenvalue is added to the spectrum when $\Sigma$ is either a flat face or a circle-shaped surface (around an axis of symmetry).
On the other hand, we introduce an eigenvalue problem where the condition on the zero normal trace on $\Sigma$ is changed by a zero tangential trace on $\Sigma$. We prove that, depending on the shape of $\Sigma$, we have a countable set of eigenpairs where the zero eigenvalue is added to the spectrum with rigid motions as associated eigenfunctions. As suggested by the Korn's inequality for vector fields with vanishing tangential trace, the eigenfunctions corresponding to the zero eigenvalue intimately depend on the shaped of $\Sigma$, as for the case of the Jones eigenfunctions. Nevertheless, the geometry conditions that the tangential trace imposes on $\Sigma$ are obviously different from what the normal trace imposes.

The rest of this paper is organized as follows: in \autoref{section:korns} we introduce some notation and provide a 
brief discussion on rigid motions (\autoref{subsection:notation} and \autoref{subsection:rigidmotions} respectively), 
to then state and prove the Korn's inequality for smooth enough vector fields on Lipschitz domains whose normal 
or tangential trace vanishes on part of the boundary (see \autoref{subsection:kornsnormal} and \autoref{subsection:kornstangent}). In \autoref{section:jones}, we first introduce the Jones eigenvalue 
problem by describing the fluid-structure interaction problem where this eigenproblem naturally appears (see \autoref{subsection:fluidstructure}). 
In \autoref{subsection:existencejones}, we use the proven Korn's inequality from 
\autoref{subsection:kornsnormal} to show the existence of Jones eigenpairs for Lipschitz domains in 2D and 3D. We further show in \autoref{subsection:variantjones} that eigenpairs of \autoref{eq:introfirstkorn} do exist when the normal trace condition on $\Sigma$ is replaced by the tangential trace. Finally, we comment in \autoref{subsection:variabledensity} about the extension of the studied eigenproblems to linearly elastic bodies with variable density.


\section{Korn's inequality for Lipschitz domains}\label{section:korns}

\subsection{Some notation}\label{subsection:notation}
We begin this section by introducing some notation to be used throughout this paper. Given a Hilbert 
space $H$ of scalar 
fields, we denote by $\hh$ to the vector valued functions such that each scalar component belongs to $H$. 
Further, $\hhh$ is 
utilized to denote tensor fields whose each entry belong to $H$. Vector fields will be 
denoted with bold 
symbols whereas tensor fields are denoted with bold Greek letters. For an open domain $\Omega$ of 
$\rrr^n$, $n\in\nnn$, the 
space $W^{s,p}(\Omega)$ denotes the usual Sobolev space of scalar fields, for $s\in\rrr$ and $1<p<\infty$, with norm 
$\|\cdot\|_{s,p,\Omega}$. For vector fields, we use the notation $\ww^{s,p}(\Omega)$ with 
the corresponding norm simply denoted by $\|\cdot\|_{s,p,\Omega}$. In particular, the Hilbert space $H^s(\Omega)$ reduces to 
the usual Sobolev space $W^{s,2}(\Omega)$ with norm $\|\cdot\|_{s,\Omega} := \|\cdot\|_{s,2,\Omega}$. 
Whenever is well defined, the inner product in $H^s(\Omega)$ is $(\cdot,\cdot)_{s,\Omega}$, whereas 
$[\cdot,\cdot]_{s,\Omega}$ is the duality pairing between $\big(H^s(\Omega)\big)^*$ and $H^s(\Omega)$. The 
vector version of $H^s(\Omega)$ is denoted by $\hh^s(\Omega)$. In particular, we use the convention 
$H^0(\Omega) = L^2(\Omega)$ and $\hh^0(\Omega) = \ll^2(\Omega)$. On the boundary $\partial\Omega$ (or part of it), the Sobolev space $W^{s,p}(\partial\Omega)$ is define accordingly for values $s\in\rrr$ and $1<p<\infty$ (see, e.g., \cite{ref:mclean2000}), with $[\cdot,\cdot]_{s,p,\partial\Omega}$ denoting the duality pairing between $W^{s,p}(\partial\Omega)$ and its dual space. Between vectors, the operation $\a\cdot\b$ is the 
standard dot product with induced norm $\|\cdot\|$. In turn, for tensors $\bsigma,\,\btau$, the double dot product is 
the usual inner product for matrices which induces the Frobenius norm, that is $\bsigma:\btau := 
\tr(\btau^\transpose\bsigma)$. For measurable tensors, $\lll^p(\Omega)$ denotes the 
space of measurable tensors with finite and measurable tensor $p$-norm (Frobenius norm if $p=2$). 

For differential operators, $\nabla$ denotes the usual gradient operator acting on either a scalar field or a vector 
field. The divergence operator ``\div'' of a vector field reduces to the trace of its gradient, while the operator 
``\bfdiv'' acting on tensors stands for the usual divergence operator applied to each row of tensors. The rotation 
operator ``\curl'' denotes the rotation of a vector in 3D. However, a 2D version of this operator can be defined where 
\curl\, acts only in the $\hat z$ direction. In fact, note that the 3D rotation
\begin{align*}
 \curl\,\u := \left(\frac{\partial u_3}{\partial x_2} - \frac{\partial u_2}{\partial 
x_3}\right)\hat{x}_1 + 
\left(\frac{\partial u_1}{\partial x_3} - \frac{\partial u_3}{\partial x_1}\right)\hat{x}_2 + 
\left(\frac{\partial 
u_2}{\partial x_1} - \frac{\partial u_1}{\partial x_2}\right)\hat{x}_3,
\end{align*}
becomes $\curl\,\u = \left(\frac{\partial u_2}{\partial x_1} - \frac{\partial u_1}{\partial 
x_2}\right)\hat{x}_3$ 
in the 2D case, where $\u$ is extended as a vector with 3 entries, that is $\u := (u_1,u_2,0)^\transpose$. 
\begin{figure}[!ht]
\centering
\includegraphics[width = 1.0\textwidth, 
height=0.35\textheight]{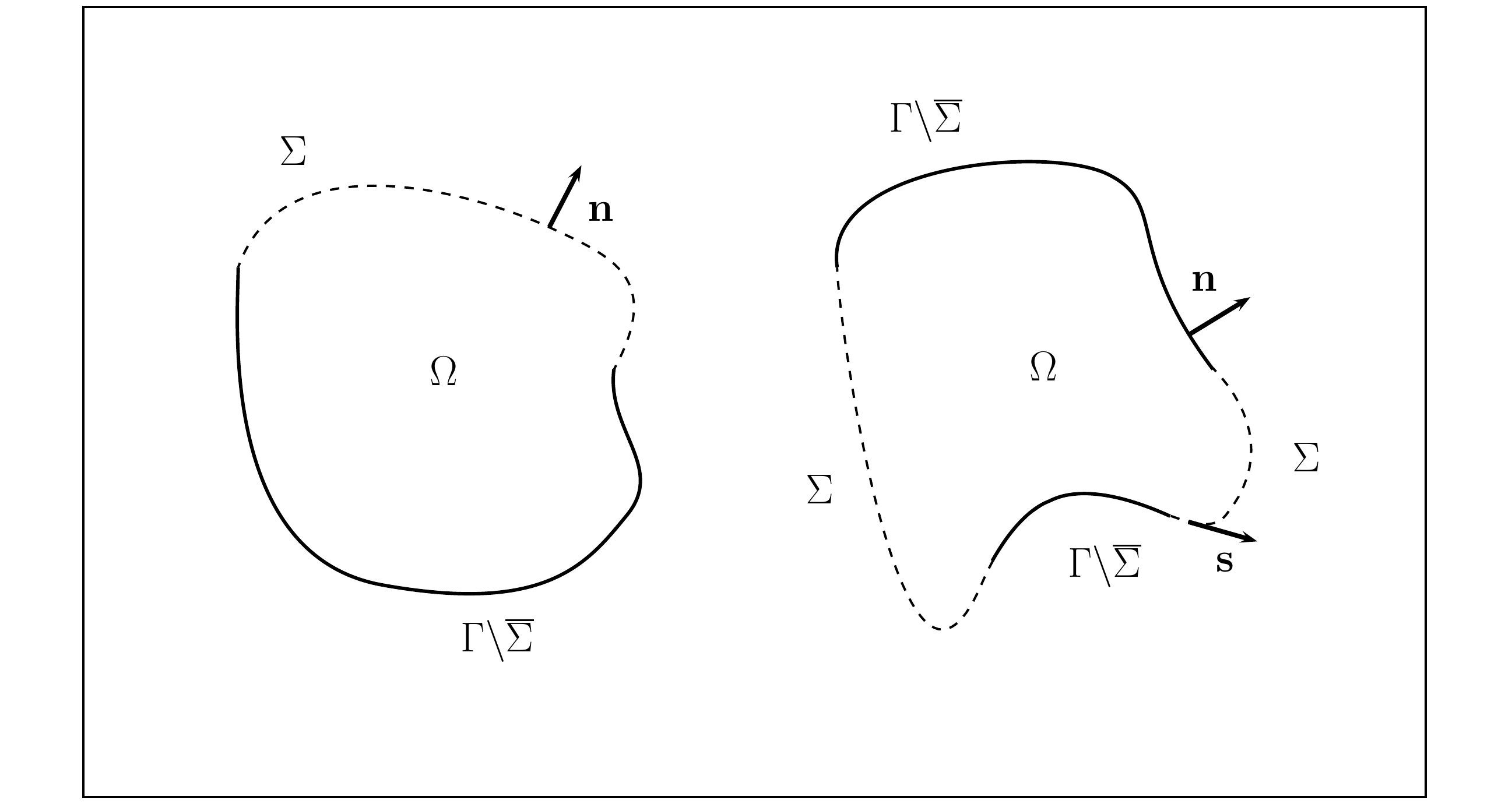}
\caption{Schematic of the domain in $\rrr^2$.}
\label{fig:setup}
\end{figure}
For an open and simply connected domain $\Omega$ of $\rrr^n$, we denote by $\n$ to the outer normal unit vector on the 
boundary $\Gamma:=\partial\Omega$. The tangent vector $\s$ can be defined as the cross product $\hat{x}_3\times\n$ (see \autoref{fig:setup}), where the normal $\n$ is extended to a 3D vector as $\n := (n_1,n_2,0)^\transpose$. Let us denote by $H(\div;\Omega)$ to the space of all vector fields in 
$\ll^2(\Omega)$ with divergence in $L^2(\Omega)$ The normal trace operator $\gamma_\n:H(\div;\Omega)\to 
H^{-1/2}(\Gamma)$ is bounded and linear with $\|\gamma_\n(\v)\|_{-1/2,\Gamma}\leq \|\v\|_{\div;\Omega}$ 
for all $\v\in H(\div;\Omega)$ (see, e.g. \cite{ref:gaticabook2014} for a detailed discussion on the normal trace in 
the space $H(\div;\Omega)$). The space $H^{-1/2}(\Gamma)$ is the the dual space of $H^{1/2}(\Gamma)$. For vectors in $\hh^1(\Omega)$, the operator $\gamma_\n$ can be identified with the trace 
operator
$\gamma_0:\hh^1(\Omega)\to \hh^{1/2}(\Gamma)$ (cf. \cite[Eq. (1.45)]{ref:gaticabook2014}) as follows
\begin{align*}
 [\gamma_\n(\v),q ]_{1/2,\Gamma} := \int_{\Gamma} \gamma_0(\v)\cdot\n\, q,\quad 
\forall\, q\in 
H^{1/2}(\Gamma).
\end{align*}
If $\Omega$ is a Lipschitz domain, then the unit normal vector $\n$ on $\Gamma$ belongs to $\ll^\infty(\Gamma)$ and thus $\gamma_0(\v)\cdot\n\in \ll^2(\Gamma)$, for all $\v\in\hh^1(\Omega)$. 

In turn, the tangential trace, $\gamma_\t$ is defined in terms of the trace operator as follows
\begin{align*}
    \gamma_\t(\v) := \gamma_0(\v) - (\gamma_0(\v)\cdot\n)\,\n\quad\forall\,\v\in\hh^1(\Omega).
\end{align*}
With these definitions, for each $\v\in\hh^1(\Omega)$ we have that
\begin{align*}
    \|\gamma_\t(\v)\|^2 &= \gamma_\t(\v)\cdot\gamma_\t(\v)\\
    &= \|\gamma_0(\v)\|^2 - 2\gamma_0(\v)\cdot(\gamma_0(\v)\cdot\n)\n + |\gamma_0(\v)\cdot\n|^2\\
    &= \|\gamma_0(\v)\|^2 - |\gamma_0(\v)\cdot\n|^2,
\end{align*}
that is
\begin{align*}
    \|\gamma_0(\v)\|^2 = |\gamma_0(\v)\cdot\n|^2 + \|\gamma_\t(\v)\|^2\quad\forall\,\v\in\hh^1(\Omega).
\end{align*}
Since $\hh^{1/2}(\Gamma)\subseteq \ll^2(\Gamma)$, the relation above implies that $\gamma_\t(\v)\in\ll^2(\Gamma)$, for all $\v\in\hh^1(\Omega)$. We also have that
\begin{align*}
    \|\gamma_0(\v)\cdot\n\|_{0,\Gamma}\leq\, \|\v\|_{1,\Omega},\quad\|\gamma_\t(\v)\|_{0,\Gamma}\leq\, \|\v\|_{1,\Omega} \quad \forall\,\v\in\hh^1(\Omega).
\end{align*}
If $\Sigma\subseteq \Gamma$ is a non-empty subset of the boundary of $\Omega$ with positive $(n-1)$-dimensional measure, the Sobolev space $H^{1/2}(\Sigma)$ contains all restrictions to $\Sigma$ of functions in 
$H^{1/2}(\Gamma)$ (see, e.g. \cite{ref:mclean2000} for a more detailed description of these spaces). The restriction of the trace operator, $\gamma_0(\cdot)|_\Sigma$ is well defined and allows us to define normal trace of elements in $\hh^1(\Omega)$. Following the reasoning to define the normal and tangential traces on $\Gamma$, their restrictions to $\Sigma$ are well defined as elements in $\ll^2(\Sigma)$, with
\begin{align}
    \|\gamma_0(\v)|_\Sigma\cdot\n|_\Sigma\|_{0,\Sigma}\leq\, \|\v\|_{1,\Omega},\quad\|\gamma_\t(\v)|_\Sigma\|_{0,\Sigma}\leq\, \|\v\|_{1,\Omega} \quad \forall\,\v\in\hh^1(\Omega).\label{eq:normaltangenth1}
\end{align}


Finally, we employ $\zero$ to denote the zero vector, tensor, or operator, depending on the context.

\subsection{Rigid motions}\label{subsection:rigidmotions}
As mentioned in \autoref{section:intro}, 
one needs to be aware of rigid motions of the domain. 
Let $\Omega$ be an open, bounded and simply connected domain in $\rrr^n$, $n\geq 2$. We define the space of all rigid 
motions of $\Omega$ as
\begin{align*}
 \rr\mm(\Omega):=\Big\{\v\in\ll^2(\Omega):\,\v(\x) = \b+\bb\x,\,\b\in\rrr^n,\,\bb^\transpose=-\bb,\,\x\in\Omega\Big\}.
\end{align*}
In this space, we identify two types of motions: pure rotations and translations. If $\rr(\Omega)$ and 
$\tt(\Omega)$ denote the space of pure rotations and pure translations of $\Omega$ respectively, then we have the 
following decomposition:
\begin{align*}
 \rr\mm(\Omega) = \rr(\Omega)+\tt(\Omega).
\end{align*}
It is well known that rigid motions are strain-energy free. In fact, let us define the strain tensor of a vector 
$\u\in\hh^1(\Omega)$ by 
\begin{align*}
 \bepsilon(\u) := \frac{1}{2}\left(\nabla\u+\nabla\u^\transpose\right).
\end{align*}
The strain tensor $\bepsilon(\cdot)$ is a linear and bounded operator from $\hh^1(\Omega)$ to $\lll^2(\Omega)$. 
In this sense, it is easy to show that null space of this operator exactly coincides with the space of rigid motions, 
that is
\begin{align}
 N(\bepsilon(\cdot)) = \rr\mm(\Omega).\label{eq:strain-rigid}
\end{align}
This implies that the Korn's inequality in \autoref{eq:introfirstkorn} cannot hold for arbitrary vector field in $\hh^1(\Omega)$, 
and therefore the strain tensor itself cannot define an equivalent norm in $\hh^1(\Omega)$.


\subsection{Korn's inequality and vanishing normal trace}\label{subsection:kornsnormal}
Let us consider an open, bounded and simply connected domain $\Omega$ in $\rrr^n$, $n\geq 2$, with Lipschitz 
boundary $\Gamma := \partial\Omega$. Let $\n$ denote the unit normal vector pointing out from $\Gamma$. As first shown 
by J.A. Nitsche \cite{ref:nitsche1981}, the {\it Korn's inequality} for vector fields in $\hh^1(\Omega)$ can be written 
as
\begin{align}
\|\nabla\v\|_{0,\Omega} \leq C\left(\|\bepsilon(\v)\|_{0,\Omega} + 
\|\v\|_{0,\Omega}\right),\quad\forall\,\v\in\hh^1(\Omega),\label{eq:kornsh1}
\end{align}
where $C>0$ is a constant depending only on $\Omega$. 
Let us consider a non-empty part of the boundary $\Sigma \subseteq \Gamma$ (possibly $\Sigma = \Gamma$) such that its 
$(n-1)$-dimensional measure is positive, i.e., $|\Sigma| > 0$. Define the Sobolev space
\begin{align}
 \hh^1_\n(\Omega;\Sigma) := \Big\{\v\in\hh^1(\Omega):\,\gamma_0(\v)\cdot\n = 0\quad\text{a.e. on $\Sigma$}\Big\}.\label{eq:h1normal}
\end{align}
This space is equipped with the usual $\hh^1$-norm. The continuity of $\gamma_0(\cdot)\cdot\n|_\Sigma$ implies the closeness 
of $\hh^1_\n(\Omega;\Sigma)$ in $\hh^1(\Omega)$. Assuming a zero normal trace on part of the boundary may not 
exclude rigid motions from $\hh^1_\n(\Omega;\Sigma)$. 
To obtain the Korn's inequality in \autoref{eq:introfirstkorn}, some properties of the domain $\Omega$ must be fixed to ensure the kernel of 
$\bepsilon(\cdot)$ in $\hh^1_\n(\Omega;\Sigma)$ is the trivial space. Indeed, in the 2D case, if $\Sigma$ is contained in a 
straight line of $\Gamma$, then the spaces $\tt(\Omega)$ and $\hh^1_\n(\Omega;\Sigma)$ have a non-trivial intersection. On the 
other hand, if $\Sigma$ is contained in the surface of a ball, then $\rr(\Omega)\cap\hh^1_\n(\Omega;\Sigma)$ has 
at least dimension 1. This means that $\Omega$ can support rigid motions $\v\in\rr\mm(\Omega)$ that are tangential to $\Sigma$ as long as $\gamma_0(\v)\in\gen\{\n|_\Sigma\}^\perp$. We summarize these properties in the following result.
 

\begin{theorem}\label{result:rmnormal}
Let $\v:=\bb\x+\b$, $\x\in\rrr^n$, be a non-zero rigid motion such that $b_{ij} = -b_{ji} = b\neq 0$ if $i = 1$ and $j = 2$, and $b_{ij} = 0$ otherwise, and $b_i = 0$ for all $i=3,\ldots,n$.
%
Let $f:\rrr^n\to\rrr$ be a Lipschitz continuous function defining $\Sigma$ almost everywhere. Then, the condition $\gamma_0(\v)\cdot\n = 0$ holds on $\Sigma$ if and only if the function $f$ can be written as
\begin{align}
    f(\x) = \frac{b}{2}(x_1^2+x_2^2) + b_1x_2 - b_2x_1 + g(x_3,\ldots,x_n),\label{eq:deflipschitzmap}
\end{align}
for some Lipschitz continuous function $g:\rrr^{n-2}\to\rrr$. If $n=2$, this function is simply a constant.
\end{theorem}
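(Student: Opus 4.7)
The rigid motion reduces to $\v(\x) = (bx_2+b_1,\,-bx_1+b_2,\,0,\ldots,0)^\transpose$, whose flow $\phi_t$ is a pure rotation in the $(x_1,x_2)$-plane about the point $(b_2/b,\,-b_1/b)$ (with $x_3,\ldots,x_n$ held fixed); the corresponding axis is $A := \{(b_2/b,\,-b_1/b)\}\times\rrr^{n-2}$. My starting observation is that the polynomial $F(\x) := \tfrac{b}{2}(x_1^2+x_2^2) + b_1 x_2 - b_2 x_1$ is an explicit first integral of $\v$: since $\nabla F = (bx_1-b_2,\,bx_2+b_1,\,0,\ldots,0)^\transpose$, a direct expansion gives $\v\cdot\nabla F \equiv 0$. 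This identity will drive both halves of the proof.

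For sufficiency, if $f(\x) = F(\x) + g(x_3,\ldots,x_n)$ with $g$ Lipschitz, then $\nabla g$ has no components in the $\hat x_1,\hat x_2$ directions, so $\v\cdot\nabla f = \v\cdot\nabla F \equiv 0$ pointwise. By Rademacher's theorem $f$ is differentiable a.e.; at a.e.\ point of $\Sigma$ the outward unit normal $\n$ is parallel to $\nabla f/\|\nabla f\|$, and hence $\gamma_0(\v)\cdot\n = 0$ a.e.\ on $\Sigma$.

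For necessity, assume $\gamma_0(\v)\cdot\n = 0$ a.e.\ on $\Sigma$. The key step is to upgrade this pointwise tangency to global invariance of $\Sigma$ under the rotation flow $\phi_t$: since $\phi_t$ is a 1-parameter group of Euclidean isometries and $\v$ is smooth, tangency a.e.\ combined with the a.e.\ existence of tangent planes (Rademacher) forces $\phi_t(\Sigma)=\Sigma$ for all $t$, so $\Sigma$ is a hypersurface of revolution about $A$. Writing the radial distance $\rho(\x) := [(x_1-b_2/b)^2+(x_2+b_1/b)^2]^{1/2}$, the algebraic identity $\rho^2 = \tfrac{2}{b}F(\x) + (b_1^2+b_2^2)/b^2$ shows that any such rotationally symmetric $\Sigma$ admits a description $\rho = h(x_3,\ldots,x_n)$ with $h$ Lipschitz, which is equivalent to $F(\x) + g(x_3,\ldots,x_n) = 0$ with $g(x_3,\ldots,x_n) := -\tfrac{b}{2}h(x_3,\ldots,x_n)^2 + (b_1^2+b_2^2)/(2b)$; this is the announced form. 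When $n=2$ the function $h$ reduces to a constant, $\Sigma$ is part of a single circle about the axis, and $g$ collapses to a constant.

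The step I expect to be most delicate is the rigorous upgrade from a.e.\ tangency to full flow-invariance for a merely Lipschitz $\Sigma$, together with the extraction of a single Lipschitz $h$ of $(x_3,\ldots,x_n)$ from the rotation-invariant $\Sigma$ globally, with extra care where $\Sigma$ meets the axis $A$ and polar coordinates degenerate. I expect these to follow from a Lipschitz implicit function argument combined with a covering of $\Sigma$ by patches on which $\rho$ is bounded away from zero, rather than any fundamentally new idea.
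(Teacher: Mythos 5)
Your sufficiency argument is correct and is essentially what the paper does: the observation that $F(\x) = \tfrac{b}{2}(x_1^2+x_2^2)+b_1x_2-b_2x_1$ is a first integral of $\v$ (i.e.\ $\v\cdot\nabla F\equiv 0$) is precisely the content of the paper's orthogonality relation $(bx_2+b_1)\partial_1 f+(-bx_1+b_2)\partial_2 f=0$, and it is arguably the cleanest way to organize both halves. For necessity, however, you take a genuinely different, global route (a.e.\ tangency $\Rightarrow$ invariance of $\Sigma$ under the rotation flow $\phi_t$ $\Rightarrow$ surface of revolution $\Rightarrow$ radial profile), whereas the paper argues locally: it reads off from the relation above that $(\partial_1 f,\partial_2 f)$ is proportional to $(bx_1-b_2,\,bx_2+b_1)$ and integrates. (The paper silently normalizes that proportionality factor to $1$; strictly the tangency condition only forces $f$ to be constant on the orbits, i.e.\ $f=\Phi(F,x_3,\dots,x_n)$, so the conclusion should be read as ``$\Sigma$ can be defined by a function of the stated form,'' which is also where your argument lands.)

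Two concrete gaps remain in your necessity half. First, the upgrade from $\gamma_0(\v)\cdot\n=0$ a.e.\ on a merely Lipschitz $\Sigma$ to $\phi_t(\Sigma)=\Sigma$ is the entire content of the implication, and you only assert it. It is true, but Rademacher plus ``a.e.\ existence of tangent planes'' does not deliver it: a.e.\ differentiability of $f$ says nothing about flow lines passing through the null set where the tangent plane fails to exist, so you need an actual argument (e.g.\ the transport equation for the characteristic function of the subgraph of $f$, or a Gronwall estimate on $t\mapsto f(\phi_t(\x))$ using that $f$ is Lipschitz and $\v\cdot\nabla f=0$ a.e.). Second, the passage from rotation invariance to a single-valued Lipschitz profile $\rho=h(x_3,\dots,x_n)$ is false globally: a torus in $\rrr^3$ is invariant under the rotation generated by $\v=(x_2,-x_1,0)^\transpose$ yet carries two radii over each $x_3$, so no single $h$ (hence no single $g$) describes it. The correct statement is local --- on each patch where $\Sigma$ is a Lipschitz graph the profile is single-valued --- and that local statement is what the theorem's hypothesis (``$f$ defining $\Sigma$'') and the paper's local computation actually address. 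With those two repairs your argument closes; the algebraic conversion $\rho^2=\tfrac{2}{b}F+(b_1^2+b_2^2)/b^2$ and the resulting formula for $g$ are correct.
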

\begin{proof}
Note that the given rigid motion can be written as
\begin{align*}
\v(\x) := (bx_2+b_1,-bx_1+b_2,0,\ldots,0)^\transpose.
\end{align*}
The unit normal vector on $\Sigma$ is then
\begin{align*}
    \n(\x) = \frac{\nabla f(\x)}{\|\nabla f(\x)\|}\quad\text{a.e. $\x\in\Sigma$}.
\end{align*}
The condition $\gamma_0(\v)\cdot\n = 0$ on $\Sigma$ implies that $\v$ and $\n$ are mutually orthogonal in the $\rrr^n$-inner product, that is $\v$ lies in the plane generated by $\n$. Equivalently, this means that $\n$ belongs to the plane generated by $\v$. 
From the vanishing normal trace condition we have the following equations, which yield almost everywhere in $\Sigma$,
\begin{align}
    (bx_2+b_1)\frac{\partial f(\x)}{\partial x_1} + (-bx_1+b_2)\frac{\partial f(\x)}{\partial x_2} = 0.\label{eq:normalcondition}
\end{align}





From the condition above and the constraint $\gamma_0(\v)\cdot\n = 0$ on $\Sigma$, the normal vector becomes
\begin{align*}
    \n(\x) = \mp \frac{(bx_1-b_2,bx_2+b_1,\g(\x))^\transpose}{\sqrt{(-bx_1+b_2)^2 + (bx_2+b_1)^2+\|\g(\x)\|^2}},
\end{align*}
where the vector-valued function $\g:\rrr^n\to\rrr^{n-2}$ is
\begin{align*}
    \g(\x):= \frac{bx_1-b_2}{\frac{\partial f}{\partial x_1}}\left(\frac{\partial f}{\partial x_3},\ldots,\frac{\partial f}{\partial x_n}\right)^\transpose.
\end{align*}
From here we see that $\frac{\partial f}{\partial x_1} = bx_1-b_2$, $\frac{\partial f}{\partial x_1} = bx_2+b_1$, and thus $\g(\x) := \left(\frac{\partial f}{\partial x_3},\ldots,\frac{\partial f}{\partial x_n}\right)^\transpose$,
and therefore completing the proof.

\end{proof}
We remark that this result outlines the important and dependence on the domain to be able to support a rigid motion which is tangential to the boundary $\Sigma$. We see that the shape of this part of the boundary is forced by the form of the rigid motion. If a rigid motion is to have more non-zero entries (more than two), then more conditions are added to the function $f$ as described above and therefore one expects to have a different pattern in the part of the boundary where one wants the selected rigid motion to be tangential. Moreover, we can identify that the general idea behind the previous result is to show there is a variety of Lipschitz domains which support tangential rigid motions, even when they are tangent only on $\Sigma$. This dependence is translated to the existence of a plane generated by the unit normal vector $\n$ on $\Sigma$ such that at least some rigid motions in $\rr\mm(\Omega)$ belong to this plane. For example, for $n=2$, the proof of the result just above shows that the rigid motion $\v:=(-x_2,x_1)^\transpose$ belongs to the plane generated by the normal vector $\n(\x) := \frac{(x_1,x_2)^\transpose}{\sqrt{c}}$, where $c$ is the constant given in the proof (assumed to be positive now). This says that the boundary $\Sigma$ belongs to the arc of a circle of radius $\sqrt{c}$ and centred at the origin. This can be extended to domains in 3D, where the rigid motion $\v(\x) := (x_2,-x_1,0)^\transpose$ is supported by $\Omega$ if and only in the normal on $\Sigma$ is $\n(\x):=(x_1,x_2,g'(x_3))^\transpose$, for some Lipschitz continuous function $g$. For a fixed $x_3\in\rrr$ such that $\x\in\Sigma$, the equation $x_1^2+x_2^2 = g(x_3)^2$ represents the arc of a circle of radius $|g(x_3)|$ and centre at the origin. If we are to add more rigid motions to this domain, the the function $g$ can be specified. In fact, if one wants the rotation $\v:= (x_3,0,-x_1)^\transpose$ to satisfy the condition $\gamma_0(\v)\cdot\n = 0$ on $\Sigma$, then we must have that $g(x_3) = \frac{x_3^2}{2}+c$, for some constant $c\in\rrr$. The normal vector becomes $\n(\x) = (x_1,x_2,x_3)^\transpose$, which implies that $\Sigma$ defines a patch of a sphere. Furthermore, we see that the rotation $\v(\x) := (0,-x_3,x_2)^\transpose$ automatically satisfies the vanishing normal trace condition on $\Sigma$.

We also remark that given a rigid motion $\v$ as defined in the statement of \autoref{result:rmnormal}, then one can construct a Lipschitz continuous function $f:\rrr^n\to\rrr$, depending on $f$, such that $f(\x) = 0$ defines a Lipschitz continuous surface in $\rrr^n$, $\Sigma$. In fact, the unit normal vector can be defined as
\begin{align*}
    \n(\x) := \pm\frac{\rr(\x)\,\v(\x)}{\|\rr(\x)\,\v(\x)\|},
\end{align*}
where $\rr(\x)$ is a rotation matrix such that $\v$ and $\n$ are mutually orthogonal at $\x\in\Sigma$. This says that, given a rigid motion $\v$, we can always find a domain $\Omega$ with a Lipschitz continuous patch $\Sigma$ of the boundary such that $\v\cdot\n = 0$ a.e. on $\Sigma$. This comes from the fact that the Lipschitz function defining the boundary $\Sigma$ for this domain cannot be written as in the form given by \autoref{result:rmnormal}.

 

The converse is, in general, not true. Not every domain can support a rigid motion. For example, if $\Omega$ is the unit square in $\rrr^2$ and $\Sigma$ consists on the union of the lines $x_1 = 0$ and $x_2 = 0$, then it is not hard to show that no rigid motions $\v$ satisfying the condition $\gamma_0(\v)\cdot\n = 0$ along $\Sigma$. More generally, for a given domain $\Omega$ and Lipschitz continuous subset $\Sigma$ of the boundary $\Gamma$, one has the following.
\begin{align}
    \dim(\rr\mm(\Omega)\cap\hh^1_\n(\Omega;\Sigma)) = \dim(\gen\{\n|_\Sigma\}^\perp),\label{eq:spannormal}
\end{align}
where the orthogonal complement is taken with respect to the usual inner product in $\rrr^n$. However, as the example presented above, the space $\gen\{\n|_\Sigma\}^\perp$ may be the trivial space for some shapes of $\Sigma$. In this sense, applying this to the example above we see that the unit normal vector on $\Sigma$ is $(-1,0)^\transpose$ and $(0,-1)^\transpose$, which together form a basis for $\rrr^2$. This indicates that $\gen\{\n|_\Sigma\}^\perp$ is the trivial space.





The Korn's inequality for vector fields in $\hh^1_\n(\Omega;\Sigma)$ is proven in the next theorem.

\begin{theorem}\label{result:kornsh1normal}
 Assume $\Omega$ is an open, bounded and simply connected domain in $\mathbb{R}^n$ with Lipschitz boundary 
$\Gamma:=\partial\Omega$. Let $\Sigma\subseteq\Gamma$ with positive $(n-1)$-dimensional measure such that $\gen\{\n|_\Sigma\}^\perp$ is the trivial space.
Then, there 
exists a constant $C > 0$ such that
\begin{align}
\|\u\|_{1,\Omega} \leq 
C\|\bepsilon(\u)\|_{0,\Omega},\quad\forall\,\u\in\hh^1_\n(\Omega;\Sigma).\label{eq:1stkorn}
\end{align}
\end{theorem}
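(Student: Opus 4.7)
The plan is to obtain \autoref{eq:1stkorn} from Nitsche's version of Korn's inequality (\autoref{eq:kornsh1}) by a compactness/contradiction argument that removes the $\|\u\|_{0,\Omega}$ term from the right-hand side. The key ingredient will be the geometric hypothesis $\gen\{\n|_\Sigma\}^\perp = \{0\}$, which via \autoref{eq:spannormal} forces $\rr\mm(\Omega)\cap\hh^1_\n(\Omega;\Sigma) = \{\zero\}$.

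First I would note that \autoref{eq:kornsh1} applied to $\u \in \hh^1_\n(\Omega;\Sigma) \subset \hh^1(\Omega)$ already gives
\begin{align*}
\|\u\|_{1,\Omega}^2 = \|\u\|_{0,\Omega}^2 + \|\nabla\u\|_{0,\Omega}^2 \leq C\left(\|\bepsilon(\u)\|_{0,\Omega}^2 + \|\u\|_{0,\Omega}^2\right),
\end{align*}
so it remains to show the existence of $\tilde C > 0$ with $\|\u\|_{0,\Omega} \leq \tilde C\|\bepsilon(\u)\|_{0,\Omega}$ for every $\u\in\hh^1_\n(\Omega;\Sigma)$. I would argue by contradiction: suppose no such constant exists, so there is a sequence $\{\u_k\}\subset \hh^1_\n(\Omega;\Sigma)$ with $\|\u_k\|_{0,\Omega} = 1$ and $\|\bepsilon(\u_k)\|_{0,\Omega} \to 0$.

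From \autoref{eq:kornsh1} the sequence $\{\u_k\}$ is bounded in $\hh^1(\Omega)$, so up to a subsequence it converges weakly in $\hh^1(\Omega)$ and, by the Rellich--Kondrachov compact embedding on the Lipschitz domain $\Omega$, strongly in $\ll^2(\Omega)$ to some $\u\in\hh^1(\Omega)$ with $\|\u\|_{0,\Omega}=1$. Applying \autoref{eq:kornsh1} to the differences $\u_k-\u_j$ and using the strong $\ll^2$ convergence together with $\|\bepsilon(\u_k)\|_{0,\Omega}\to 0$ shows $\{\u_k\}$ is Cauchy in $\hh^1(\Omega)$, hence $\u_k\to\u$ strongly in $\hh^1(\Omega)$. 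Continuity of $\bepsilon(\cdot):\hh^1(\Omega)\to\lll^2(\Omega)$ then yields $\bepsilon(\u)=\zero$, and by \autoref{eq:strain-rigid} we get $\u\in\rr\mm(\Omega)$. Moreover, since the normal-trace map $\gamma_0(\cdot)\cdot\n|_\Sigma$ is continuous from $\hh^1(\Omega)$ into $\ll^2(\Sigma)$ by \autoref{eq:normaltangenth1}, the space $\hh^1_\n(\Omega;\Sigma)$ is closed in $\hh^1(\Omega)$, so the strong limit $\u$ also lies in $\hh^1_\n(\Omega;\Sigma)$.

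The final step is to exploit the hypothesis on $\Sigma$: by \autoref{eq:spannormal},
\begin{align*}
\dim\left(\rr\mm(\Omega)\cap\hh^1_\n(\Omega;\Sigma)\right) = \dim\left(\gen\{\n|_\Sigma\}^\perp\right) = 0,
\end{align*}
so $\u=\zero$, contradicting $\|\u\|_{0,\Omega}=1$. This yields the desired constant $\tilde C$, and combining with \autoref{eq:kornsh1} gives \autoref{eq:1stkorn}. The main obstacle is the passage from the kernel characterization in \autoref{result:rmnormal} and \autoref{eq:spannormal} to the statement that the triviality of $\gen\{\n|_\Sigma\}^\perp$ is sufficient to exclude all rigid motions tangential to $\Sigma$; the remainder is a standard Peetre--Tartar style compactness argument. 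One should also be careful that the Rellich embedding and the continuity of the trace are both valid on Lipschitz domains, which is precisely the setting assumed here.
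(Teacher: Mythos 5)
Your proposal is correct and follows essentially the same route as the paper: a contradiction/compactness argument using Nitsche's inequality \autoref{eq:kornsh1} to upgrade weak to strong $\hh^1$ convergence, the closedness of $\hh^1_\n(\Omega;\Sigma)$, the identification $N(\bepsilon(\cdot))=\rr\mm(\Omega)$, and the kernel characterization \autoref{eq:spannormal} to conclude the limit vanishes. The only cosmetic difference is that you normalize $\|\u_k\|_{0,\Omega}=1$ and prove the intermediate bound $\|\u\|_{0,\Omega}\leq \tilde C\|\bepsilon(\u)\|_{0,\Omega}$ before recombining with \autoref{eq:kornsh1}, whereas the paper normalizes $\|\u_k\|_{1,\Omega}=1$ and obtains \autoref{eq:1stkorn} directly; both are valid.
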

\begin{proof}
 By contradiction, suppose we can find $\u_k\in\hh^1_\n(\Omega;\Sigma)$ such that
\begin{align*}
  \|\u_k\|_{1,\Omega} = 1,\quad \|\bepsilon(\u_k)\|_{0,\Omega} < \frac{1}{k},\quad\forall\,k\in\nnn.
 \end{align*}
 Since $\{\u_k\}$ is bounded in $\hh^1(\Omega)$, we know that there is a vector field $\u\in\hh^1(\Omega)$ and a 
subsequence $\{\u_{k_l}\}$ of $\{\u_k\}$ such that $\u_{k_l}\to\u$ weakly in $\hh^1(\Omega)$. Also, using that 
the inclusion $\hh^1(\Omega)\hookrightarrow\ll^2(\Omega)$ is compact, we have that $\u_{k_l}\to\u$ strongly in 
$\ll^2(\Omega)$. Moreover, note that $\bepsilon(\u_{k_l})\to {\bf 0}$ in $\ll^2(\Omega)$. Using the Korn's 
inequality in \autoref{eq:kornsh1} we obtain
\begin{align*}
 \|\u_{k_j}-\u_{k_l}\|_{1,\Omega} \leq C\left(\|\bepsilon(\u_{k_j}-\u_{k_l})\|_{0,\Omega} + 
\|\u_{k_j}-\u_{k_l}\|_{0,\Omega}\right),
\end{align*}
that is, $\{\u_{k_l}\}$ is a Cauchy sequence in $\hh^1(\Omega)$. The completeness of $\hh^1(\Omega)$ 
and the weak 
convergence of $\{\u_{k_l}\}$ in $\hh^1(\Omega)$ imply that $\u_{k_l}\to\u$ strongly in 
$\hh^1(\Omega)$, and $\|\u\|_{1,\Omega} 
= 1$. Furthermore, the closeness of $\hh^1_\n(\Omega;\Sigma)$ in $\hh^1(\Omega)$ implies that $\u\in\hh^1_\n(\Omega;\Sigma)$. 
In turn, we see that
\begin{align*}
 \|\bepsilon(\u_{k_l})-\bepsilon(\u)\|_{0,\Omega} \leq \frac{1}{2}\|\u_{k_l}-\u\|_{1,\Omega}\to 0,
\end{align*}
which says that $\bepsilon(\u) = {\bf 0}$. Therefore $\u$ is a rigid motion in $\rrr^n$ with $\gamma_\n(\u) = 0$ a.e. 
on $\Sigma$. Since $\Sigma$ is 
such that $\gen\{\n|_\Sigma\}^\perp$ is the trivial space, the charactierization in \autoref{eq:spannormal}
implies the intersection $\rr\mm(\Omega)\cap\hh^1_\n(\Omega;\Sigma)$ is the trivial space, concluding that $\u = {\bf 0}$, 
which is a contradiction since $\|\u\|_{1,\Omega} = 1$.
\end{proof}

The same proof remains true in the case $\Sigma$ exactly coincides with $\Gamma$. Nevertheless, the shape of the entire boundary is defined in this case by the normal trace as this condition is carried out along the 
whole boundary. For pure translations we can see that the space $\tt(\Omega)\cap\hh^1_\n(\Omega;\Gamma)$ would be 
non-trivial if and only if $\Gamma$ consists of at least one plane in $\rrr^n$. Note that the boundness of the domain is lost for this case. On the other hand, 
$\rr(\Omega)\cap\hh^1_\n(\Omega;\Gamma)$ would have non-zero vectors in more cases. To identify these domains, we consider the following definition concerning symmetries of the shape of the domain.
\begin{definition}[adopted from \cite{ref:bauer2016-1,ref:desvillettes2002}]\label{def:axisymmetricdomain}
 An open domain $U\subseteq\rrr^n$ is axisymmetric if there is a non-zero vector $\r\in\rr(U)$ such that $\r\cdot\n = 0$ a.e. on $\partial U$.
\end{definition}
With this definition we see that the only axisymmetric domains in $\rrr^2$ are the circle and its complement. 
Nonetheless, in higher dimensions the number of axisymmetric domains becomes very large. For example, any solid of revolution 
of a Lipschitz continuous function defined on a bounded interval in $\rrr$ would be axisymmetric in $\rrr^3$. In this 
manner, whenever the domain $\Omega$ is axisymmetric, the space $\rr(\Omega)\cap\hh^1_\n(\Omega;\Gamma)$ would have 
dimension at least one. Indeed, as shown in \cite{ref:bauer2016-2}, the inequality in \autoref{eq:1stkorn} holds for 
non-axisymmetric Lipschitz domains provided $\Sigma$ coincides exactly with the boundary $\Gamma$.

\subsection{Case of vanishing tangential trace}\label{subsection:kornstangent}
One can derive a similar conclusion as the one given in \autoref{result:kornsh1normal} but for vectors with a zero tangential trace on part of the boundary. As in the previous section, let $\Omega$ be a bounded and simply connected Lipschitz domain in $\rrr^n$ and let $\Sigma$ be a non-empty part of the boundary $\Gamma:=\partial\Omega$ with positive $(n-1)$-dimensional measure. Define the space
\begin{align*}
 \hh^1_\s(\Omega;\Sigma) := \Big\{\v\in\hh^1(\Omega):\,\gamma_\t(\v) = \zero\quad\text{a.e. on $\Sigma$}\Big\}.
\end{align*}
In this space we consider the usual $\hh^1$-norm. With the definition and properties of the tangential trace in 
$\hh^1(\Omega)$ we can show that $\hh^1_\s(\Omega;\Sigma)$ is a closed subspace of $\hh^1(\Omega)$. 

The case $\Sigma = \Gamma$ was proven in \cite{ref:bauer2016-2}, where the authors showed that the Korn's 
inequality in \autoref{eq:1stkorn} holds 
for any vector field in $\hh^1_\s(\Omega;\Sigma)$ with no extra assumptions on the geometry of $\Omega$. However, in case 
$\Sigma$ is strictly included in $\Gamma$, the shape of $\Sigma$ plays an important role in the validity of 
\autoref{eq:1stkorn}.

\begin{theorem}\label{result:rmtangent}
Under the same assumptions as in \autoref{result:rmnormal}, the rigid motion $\v$ satisfies the condition $\gamma_\t(\v) = \zero$ on $\Sigma$ if and only if the function $f$ can be written as
\begin{align*}
    f(\x) := b_1x_2-b_2x_1 - g(x_3,\ldots,x_3),
\end{align*}
for some Lipschitz continuous function $g:\rrr^{n-2}\to\rrr$. If $n=2$, then $g$ is simply a constant function.
\end{theorem}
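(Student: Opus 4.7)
My plan is to mirror the strategy of \autoref{result:rmnormal}, but replacing the orthogonality $\v\cdot\n = 0$ by its dual, the parallelism $\v \parallel \n$. Writing the rigid motion as $\v(\x) = (bx_2+b_1,\,-bx_1+b_2,\,0,\ldots,0)^\transpose$ and the unit normal on $\Sigma$ as $\n(\x) = \nabla f(\x)/\|\nabla f(\x)\|$, the condition $\gamma_\t(\v) = \zero$ on $\Sigma$ is equivalent to $\v(\x)$ being parallel to $\nabla f(\x)$ at a.e.\ $\x \in \Sigma$, i.e.\ to the simultaneous vanishing on $\Sigma$ of every $2\times 2$ minor of the $n\times 2$ matrix $[\v(\x)\,|\,\nabla f(\x)]$.

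The first step I would take is to reduce the problem to two dimensions. Since $v_j \equiv 0$ for $j\geq 3$, the minors involving a row index $j\geq 3$ read
\[
(bx_2+b_1)\,\frac{\partial f}{\partial x_j} = 0,\qquad (-bx_1+b_2)\,\frac{\partial f}{\partial x_j} = 0\qquad (j\geq 3),
\]
on $\Sigma$. Since the linear forms $bx_2+b_1$ and $-bx_1+b_2$ have a common zero only at $\x_0 := (b_2/b,\,-b_1/b)$ in the $(x_1,x_2)$-plane, they vanish simultaneously only on the axis $\{\x_0\}\times\rrr^{n-2}$, a set of vanishing $(n-1)$-dimensional measure in $\Sigma$. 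Hence $\partial f/\partial x_j = 0$ on $\Sigma$ for $j=3,\ldots,n$, and by Lipschitz regularity $f$ admits a decomposition $f(\x) = F(x_1,x_2) + g(x_3,\ldots,x_n)$.

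The surviving $(1,2)$-minor relation then becomes the first-order linear homogeneous PDE
\[
(bx_1-b_2)\,\frac{\partial F}{\partial x_1} + (bx_2+b_1)\,\frac{\partial F}{\partial x_2} = 0,
\]
whose characteristic curves are the radial rays emanating from $\x_0$. Hence $F$ must be constant along each such ray. Combined with the requirement that $\{f=0\}$ be a genuine Lipschitz surface of positive $(n-1)$-dimensional measure, this forces $\{F=c\}$ locally to be a straight line through $\x_0$, which we may represent by the linear form $F(x_1,x_2) = b_1 x_2 - b_2 x_1$ (absorbing any additive constant into $g$), yielding the claimed representation. For the converse, a direct computation of $\nabla f = (-b_2,\,b_1,\,-\partial g/\partial x_3,\ldots,-\partial g/\partial x_n)^\transpose$ verifies the parallelism algebraically.

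The principal obstacle I anticipate lies in the final step of the reduction: the characteristic PDE admits, in principle, any function of the angular coordinate around $\x_0$ as a solution, so one must carefully invoke both the Lipschitz regularity of $\Sigma$ and its codimension-one dimension to rule out non-planar alternatives and justify the linear choice of $F$, handling also the mild degeneracy at the axis $\{\v=\zero\}$.
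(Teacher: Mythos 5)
Your reduction is set up correctly and in fact goes further than the paper's own argument: the paper merely records the $(1,2)$-minor identity $(-bx_1+b_2)\partial_{x_1}f-(bx_2+b_1)\partial_{x_2}f=0$, asserts $\n=\v/\|\v\|$, and stops, whereas you correctly rephrase $\gamma_\t(\v)=\zero$ as the vanishing of all $2\times2$ minors of $[\v\,|\,\nabla f]$, deduce $\partial f/\partial x_j=0$ a.e.\ on $\Sigma$ for $j\geq 3$ (away from the measure-zero axis where $\v$ vanishes), and identify the surviving minor as a transport equation whose characteristics are the rays emanating from $\x_0=(b_2/b,-b_1/b)$. The gap you flag at the end is, however, fatal rather than "mild": the linear form you want to land on is not a solution of your own transport equation. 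Indeed, with $F(x_1,x_2)=b_1x_2-b_2x_1$ one computes
\begin{align*}
(bx_1-b_2)\frac{\partial F}{\partial x_1}+(bx_2+b_1)\frac{\partial F}{\partial x_2}
= b_1^2+b_2^2+b\,(b_1x_2-b_2x_1),
\end{align*}
which vanishes only on a single line; equivalently, the "direct verification" you propose for the converse fails, since $(-b_2,b_1,-\partial g/\partial x_3,\ldots)$ is parallel to $(bx_2+b_1,-bx_1+b_2,0,\ldots,0)$ at no point off that line when $b\neq 0$ — and the hypotheses inherited from \autoref{result:rmnormal} require $b\neq 0$. There is also an internal inconsistency: having derived $\partial f/\partial x_j=0$ for $j\geq 3$, you cannot then reinstate a nonconstant $g(x_3,\ldots,x_n)$ in the decomposition.

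What your (correct) intermediate steps actually prove is that $F$ must be a function of the angular coordinate about $\x_0$, so that a Lipschitz level set of positive $(n-1)$-measure forces $\Sigma$ to lie in a half-plane containing the axis $\{\x_0\}\times\rrr^{n-2}$: a rotation field is normal to radial planes through its axis, not to the planes $b_1x_2-b_2x_1=\mathrm{const}$. This is not the formula in the statement, which appears to be the normal-trace formula of \autoref{result:rmnormal} with the quadratic term deleted; your analysis, pushed to its honest conclusion, contradicts it. So the verdict is: sound method through the characteristic analysis, but the concluding identification of $F$ with the stated linear form is unjustified and in fact false, and no argument along these lines can recover the theorem as written.
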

\begin{proof}
The proof follows the same essential steps to those presented in the proof of \autoref{result:rmnormal}. However, here the function $f$ satisfies the condition
\begin{align*}
    (-bx_1+b_2)\frac{\partial f(\x)}{\partial x_1} - (bx_2+b_1)\frac{\partial f(\x)}{\partial x_2} = 0,
\end{align*}
for almost every $\x\in\Sigma$. This gives the following form of the unit normal vector on $\Sigma$
\begin{align*}
    \n(\x) := \frac{\v(\x)}{\|\v(\x)\|}\quad\x\in\Sigma.
\end{align*}
This completes the proof as $\gamma_\t(\v) = \zero$ on $\Sigma$ with this choice of the normal vector.

\end{proof}
As for the case of tangential rigid motions on $\Sigma$ shown in \autoref{result:rmnormal}, the result above provides the simplest case in which rigid motions shape the form of the boundary normal rigid motion on $\Sigma$ are considered. If one needs to add more rigid motions then the shape of $\Sigma$ must change accordingly to be able to satisfy the tangential condition for all the rigid motions. In essence, we have the following characterization of the intersection between the space of rigid motions and $\hh^1_\s(\Omega;\Sigma)$
\begin{align*}
    \dim(\rr\mm(\Omega)\cap\hh^1_\s(\Omega;\Sigma)) = \dim(\Pi_{\n|_\Sigma}^\perp),
\end{align*}
where $\Pi_{\n|_\Sigma}$ is the plane generated by $\n|_\Sigma$, which is tangent to $\Sigma$. The orthogonal complement is taken in the $\rrr^n$ usual inner product, which must hold almost everywhere on $\Sigma$.

The corresponding Korn's inequality for vector fields in $\hh^1_\s(\Omega;\Sigma)$ is given next.
\begin{theorem}\label{result:kornsh1tangent}
 Assume $\Omega$ is an open, bounded and simply connected domain of $\rrr^n$, $n\geq 2$ with Lipschitz boundary 
$\Gamma:=\partial\Omega$. Let $\Sigma$ be a subset of $\Gamma$ with positive $(n-1)$-dimensional measure such that $\Pi_{\n|_\Sigma}^\perp$ is the trivial space. Then, 
there is a positive constant $c>0$, such that
\begin{align*}
 \|\u\|_{1,\Omega} \leq c\|\bepsilon(\u)\|_{0,\Omega},\quad\forall\,\u\in\hh^1_\s(\Omega;\Sigma).
\end{align*}
\end{theorem}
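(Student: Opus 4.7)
The plan is to mirror the argument used in \autoref{result:kornsh1normal} almost verbatim, with the key substitution that the trivial-kernel hypothesis is now expressed in terms of $\Pi_{\n|_\Sigma}^\perp$ (via \autoref{result:rmtangent}) rather than $\gen\{\n|_\Sigma\}^\perp$. The structural ingredients are (i) Nitsche's form of Korn's inequality \autoref{eq:kornsh1}, (ii) the compact embedding $\hh^1(\Omega)\hookrightarrow\ll^2(\Omega)$, (iii) closedness of $\hh^1_\s(\Omega;\Sigma)$ in $\hh^1(\Omega)$, and (iv) the characterization of rigid motions with vanishing tangential trace on $\Sigma$.

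First I would argue by contradiction: assume there is a sequence $\{\u_k\}\subset \hh^1_\s(\Omega;\Sigma)$ with $\|\u_k\|_{1,\Omega}=1$ and $\|\bepsilon(\u_k)\|_{0,\Omega}<1/k$. By boundedness in $\hh^1(\Omega)$, extract a subsequence $\{\u_{k_l}\}$ that converges weakly in $\hh^1(\Omega)$ and strongly in $\ll^2(\Omega)$ to some $\u\in\hh^1(\Omega)$. Applying Nitsche's inequality \autoref{eq:kornsh1} to differences $\u_{k_j}-\u_{k_l}$, together with $\bepsilon(\u_{k_l})\to\zero$ in $\lll^2(\Omega)$, yields a Cauchy sequence in $\hh^1(\Omega)$. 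Thus $\u_{k_l}\to\u$ strongly in $\hh^1(\Omega)$ with $\|\u\|_{1,\Omega}=1$, and continuity of the strain map gives $\bepsilon(\u)=\zero$, so $\u\in\rr\mm(\Omega)$.

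Next, closedness of $\hh^1_\s(\Omega;\Sigma)$ in $\hh^1(\Omega)$ (which follows from the continuity of $\gamma_\t(\cdot)|_\Sigma:\hh^1(\Omega)\to\ll^2(\Sigma)$ recorded in \autoref{eq:normaltangenth1}) gives $\u\in \hh^1_\s(\Omega;\Sigma)$. Therefore $\u\in\rr\mm(\Omega)\cap\hh^1_\s(\Omega;\Sigma)$. By the characterization
\[\dim\bigl(\rr\mm(\Omega)\cap\hh^1_\s(\Omega;\Sigma)\bigr)=\dim\bigl(\Pi_{\n|_\Sigma}^\perp\bigr),\]
stated just after \autoref{result:rmtangent} and itself a consequence of \autoref{result:rmtangent}, the hypothesis that $\Pi_{\n|_\Sigma}^\perp$ is trivial forces $\u=\zero$, contradicting $\|\u\|_{1,\Omega}=1$.

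The only nontrivial step is verifying that the trivial-intersection conclusion really is guaranteed by the hypothesis on $\Pi_{\n|_\Sigma}^\perp$; the rest is a direct transcription of the normal-trace proof. One has to be a little careful that \autoref{result:rmtangent} only explicitly describes rigid motions whose angular part is the elementary $(1,2)$-rotation, so to cover a general rigid motion one must either argue by invariance under orthogonal change of coordinates (so that the same classification applies to every pair of axes generating the rotation) or state the characterization in coordinate-free form: a nonzero rigid motion $\v$ satisfies $\gamma_\t(\v)=\zero$ on $\Sigma$ precisely when $\n|_\Sigma$ is parallel to $\v$ a.e.\ on $\Sigma$, i.e.\ when $\v\in\Pi_{\n|_\Sigma}^\perp$ in the tangent plane sense. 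Once this geometric translation is in hand, the contradiction closes and the theorem follows.
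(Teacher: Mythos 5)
Your proof is correct and follows essentially the same route as the paper, which itself simply states that the result follows by repeating the compactness/contradiction argument of \autoref{result:kornsh1normal} with \autoref{result:rmtangent} supplying the trivial intersection $\rr\mm(\Omega)\cap\hh^1_\s(\Omega;\Sigma)=\{\zero\}$. Your closing remark about needing to extend the classification in \autoref{result:rmtangent} beyond the elementary $(1,2)$-rotation is a fair observation, but it is a gap the paper shares rather than a divergence in method.
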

\begin{proof}
 The proof follows from the same steps given in the proof of \autoref{result:kornsh1normal} and the use of 
\autoref{result:rmtangent} to derive the necessary contradiction.
\end{proof}


In the forthcoming section we introduce the {\it Jones eigenproblem}, where elastic waves with traction 
free 
condition are constrained to have a vanishing normal trace on the boundary. This extra condition means that this 
eigenvalue problem is over-determined; we may not have eigenpairs for this problem in some situations 
(see, e.g. \cite{ref:harge1990}). However, with the help of \autoref{result:kornsh1normal}, we are able to show that, 
in most of the cases for Lipschitz domains, there is a 
complete set of eigenfunctions with non-negative eigenvalues.

\section{The Jones eigenvalue problem}\label{section:jones}

\subsection{Fluid-structure interaction problem}\label{subsection:fluidstructure}
As discussed in \autoref{section:intro}, the Jones eigenproblem was originally described within the context of a 
fluid-structure interaction problem. 
Consider a bounded, simply connected domain 
$\Omega_s\subseteq\mathbb{R}^n$ with boundary $\Gamma_s 
:= \partial\Omega_s$ representing an isotropic and linearly elastic body in $\mathbb{R}^n$. This body is assumed to be
immersed in a compressible inviscid fluid occupying the region $\Omega_f := \mathbb{R}^n\backslash\bar{\Omega}_s$. See 
\autoref{fig:schematic} for a schematic of this situation.

\begin{figure}[!ht]
\centering
\includegraphics[width = .75\textwidth, 
height=0.25\textheight]{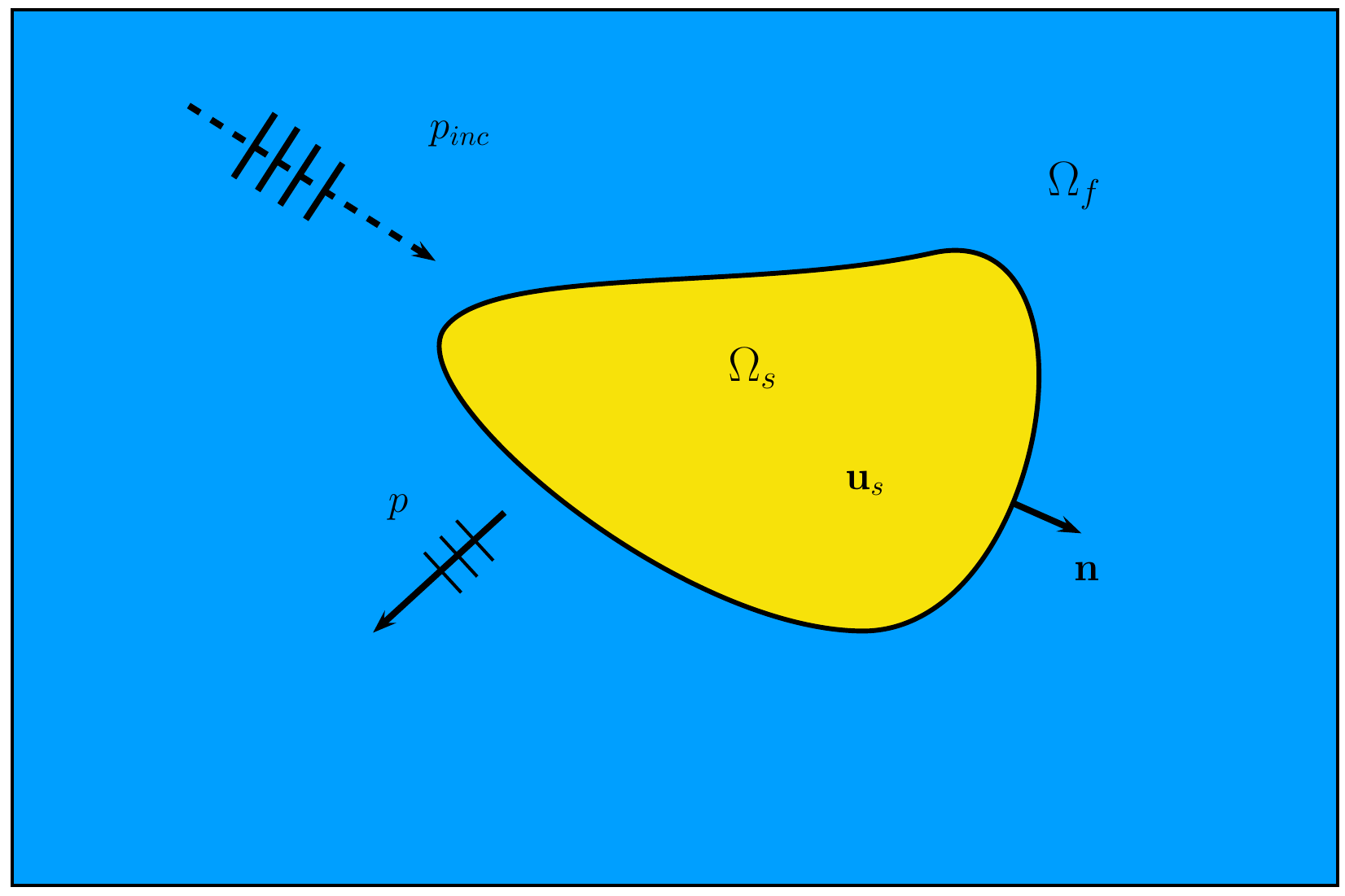}
\caption{Schematic of the fluid-structure interaction problem.}
\label{fig:schematic}
\end{figure}
Note that 
the bounded part of the boundary of $\Omega_f$, 
$\Gamma_f:=\partial\Omega_f$ coincides with the boundary of the (bounded region) $\Omega_s$. For simplicity we write 
$\Gamma := \Gamma_f = \Gamma_s$.

The parameters describing the elastic properties of $\Omega_s$ are the so-called Lam\'e constants $\mu>0$ and 
$\lambda\in\mathbb{R}$, satisfying the condition
\begin{align}
 \lambda+\left(\frac{2}{n}\right)\mu>0\label{eq:lamecondition}
\end{align}
One fluid-structure interaction problem of interest concerns the situation when the fields are time-harmonic, allowing 
us to factor out the time-dependence and consider the problem in the frequency domain. Using standard  interface 
conditions coupling the pressure in the fluid $p$ and the elastic displacement in the solid $\u$, the fluid-solid 
interaction problem in the frequency domain reads: given a prescribed pressure $q\in L^2(\Omega_f)$,a volume force $\g\in\ll^2(\Omega_s)$, and an 
incident pressure $p_{\rm inc}\in H^1(\Omega_f)$, we want to locate a pressure field $p$ in $\Omega_f$ and elastic deformations $\u$ of 
$\Omega_s$, 
satisfying
\begin{subequations}\label{eq:coupled-problem}
 \begin{align}
 &\Delta p + \left(\frac{\omega^2}{c^2}\right)p = g,\,\,\text{on $\Omega_f$},\quad-\rho \omega^2\u 
- \bfdiv\,\bm\sigma(\u) = \g,\,\,\text{in $\Omega_s$},\\
 &-(p + p_{\rm inc})\n = \bm\sigma(\u)\n,\quad \frac{\partial}{\partial\n}( p + p_{\rm inc}) = 
\rho\omega^2\u\cdot\n,\,\,\text{on $\Gamma$},\\
 &\frac{\partial p}{\partial r} - i\left(\frac{\omega}{c}\right) p = o(1/r),\,\,\text{as 
$r:=\|\x\|\to\infty$}.
\end{align}
\end{subequations}
The parameter {$c$} is the constant speed of the sound in the fluid, $\rho$ is the density of the solid (assumed to be constant), and $\bm\sigma$ is the usual Cauchy tensor for linear elasticity. This is defined in terms of the strain tensor $\bepsilon(\u)$ as
\begin{align*}
	\bm\sigma(\u) := 2\mu\bepsilon(\u) + \lambda\,\tr(\bepsilon(\u))\ii,\quad\text{in $\Omega_s$.}
\end{align*}
This is a commonly accepted formulation for time-harmonic fluid-solid interaction problems involving inviscid flow, 
see, for example, \cite{ref:hsiao2000, ref:hsiao2017, ref:huttunen2008}. The system in 
\autoref{eq:coupled-problem} is known to possess a non-trivial kernel under certain situations. As discussed in 
\cite{ref:jones1983}, this problem lacks a unique solution whenever $\u$ is a non-trivial solution of the homogeneous 
problem:
\begin{align}
 -\bfdiv\,\bm\sigma(\u) = \rho \omega^2\u,\,\,\text{in $\Omega_s$},\quad\bm\sigma(\u)\n = {\bf 
0},\quad\u\cdot\n = 0,\,\,\text{on $\Gamma$}.\label{eq:jones-modes}
\end{align}
The pair $(\omega^2,\u)$ solving this eigenvalue problem is a Jones eigenpair \cite{ref:jones1983}. The homogeneous problem 
for the 
displacements can be viewed as the usual 
eigenvalue problem for linear elasticity with traction free boundary condition, plus the extra constraint on the normal 
trace of $\u$ along the boundary. Therefore, we may consider this as an over-determined problem. We know that there is a 
countable 
number of eigenmodes for linear 
elasticity 
with free traction given reasonable assumptions on $\Gamma$  (see \cite{ref:babuskaosborn1991} and references therein). 
The extra 
condition  $\u\cdot \n=0$ on the boundary  plays an important role in the existence of the zero eigenvalue of 
\autoref{eq:jones-modes}. All of these 
properties are discussed in detailed in the next section.

\subsection{Jones eigenpairs}\label{subsection:joneseps}
Let $\Omega$ be an open, bounded and simply connected domain in $\rrr^n$, $n\in\{2,3\}$, with Lipschitz boundary 
$\Gamma:=\partial\Omega$. We denote by $\n$ and $\s$ the normal and tangential unit vectors on $\Gamma$. The 
normal 
vector is chosen to point out from $\Omega$. 
Assume $\u:\Omega\to\rrr^n$ denotes the displacement vector of small 
deformations of an isotropic elastic material occupying the domain $\Omega$ of constant density $\rho>0$. The Jones eigenvalue problem reads: find a non-zero displacement $\u$ and a frequency $\omega\in\ccc$ such 
that 
\begin{subequations}\label{eq:jonesmain}
\begin{align}
\mu \Delta \u + (\lambda+\mu)\nabla(\div\,\u) + \rho\omega^2\u = \zero\quad \text{in 
$\Omega$,}\label{eq:jonesmain1}\\
\t(\u) = \zero,\quad \u\cdot\n = 0, \quad \text{on 
$\Gamma$,}\label{eq:jonesmain2}
\end{align}
\end{subequations}
where $\t(\u)$ is the traction operator on $\Gamma$, defined as
\begin{align*}
\t(\u) :=  2\mu \frac{\partial \u}{\partial \n} + \lambda(\div\,\u)\,\n + \mu(\n\times\curl\,\u),
\end{align*}
and the constants $\lambda$ and $\mu$ are the Lam\'e parameters as described in the previous section, and satisfy the condition given in \autoref{eq:lamecondition}. 

This formulation of the Jones eigenproblem is equivalent to that given by \autoref{eq:jones-modes}. Indeed, using the vector Laplacian operator, we see that
\begin{align*}
	{\rm\bf div}\,\bm\sigma(\u) = \mu\Delta\u + (\lambda + \mu)\nabla({\rm div}\,\u)\quad\text{on 
$\Omega$}.
\end{align*}
The traction operator $\t(\u)$ then becomes $\t(\u) = \bsigma(\u)\n$ on $\Gamma$.

In the present manuscript we analyse the existence of eigenpairs of slightly different problem, which can be reduced to the original formulation of the Jones eigenvalue problem. Let $\Sigma\subseteq \Gamma$ be a non-empty set such that $|\Sigma|>0$. We are interested in displacements $\u$ of $\Omega$ and frequencies $\omega\in\rrr$ for which \autoref{eq:jonesmain1} is satisfied along with its free traction boundary condition $\t(\u) = \zero$ along the boundary $\Gamma$ (see first condition in \autoref{eq:jonesmain2}). As an extra constraint, we only impose the condition $\u\cdot\n = 0$ on the part of the boundary $\Sigma$. Concretely, we want to find eigenpairs $(\omega^2,\u)\in\rrr\times\hh^1(\Omega)$ solving the following eigenproblem:
\begin{subequations}
\begin{align*}
\mu \Delta \u + (\lambda+\mu)\nabla(\div\,\u) + \rho\omega^2\u = \zero\quad \text{in 
$\Omega$,}\\
\t(\u) = \zero\quad \text{on $\Gamma$,}\quad \u\cdot\n = 0, \quad \text{on $\Sigma$,}
\end{align*}
\end{subequations}
It is clear that this problem coincides with the formulation of the Jones eigenproblem if $\Sigma = \Gamma$. Within this manuscript, eigenpairs solving this problem are simply called {\it Jones eigenpairs}. We can again re-formulate the problem above with the use of the Cauchy stress tensor as follows
\begin{subequations}\label{eq:main}
\begin{align}
\bfdiv\,\bsigma(\u) + \rho\omega^2\u = \zero\quad \text{in 
$\Omega$,}\label{eq:main1}\\
\bsigma(\u) = \zero\quad \text{on $\Gamma$,}\quad \u\cdot\n = 0, \quad \text{on $\Sigma$,}\label{eq:main2}
\end{align}
\end{subequations}

In the next section we prove that Jones eigenpairs do exists whenever the domain is Lipschitz. We further show that this is true even when the zero normal trace is assumed only in the non-empty part $\Sigma$ of the boundary of the domain. Nonetheless, there are many cases for which rigid motions are part of the spectrum of the problem. As described in \autoref{section:korns}, we see that the number of eigenfunctions associated with the zero Jones eigenvalue increases as we increase the dimension of the problem and changes as we modify the shape of $\Sigma$.

\subsection{Existence of generalised Jones eigenpairs}\label{subsection:existencejones}
Throughout this section we assume that $\Omega$ is a bounded and simply connected Lipschitz domain of $\rrr^n$, with 
$n\in\{2,3\}$. Let $\Sigma\subseteq\Gamma:=\partial\Omega$ be a non-empty subset such that $|\Sigma|>0$. In general, analytic 
solutions of eigenvalue problems may not be simple to find (if possible) explicitly on domains 
other than the rectangle or the ball \cite{ref:natroshvili2005,ref:}. Alternatively, 
numerical methods can be utilized to approximate the spectrum of linear operators defined on more general domains. A 
particular 
choice 
is to derive a weak formulation to characterize and show the existence of eigenpairs. With this approach, we seek 
eigenpairs satisfying a generalized eigenvalue problem through the use of sesquilinear forms. We can apply this approach to the Jones 
eigenvalue problem in \autoref{eq:main}. Consider its equivalent 
form \autoref{eq:main} in terms of the strain tensor $\bepsilon(\cdot)$ to obtain the following 
weak formulation: find eigenpairs $\omega^2\in\rrr$, $\u\in\hh^1_\n(\Omega;\Sigma)$ such that
\begin{align}
 a(\u,\v) = \omega^2 b(\u,\v),\quad\forall\,\v\in\hh^1_\n(\Omega;\Sigma),\label{eq:weakform}
\end{align}
where the space $\hh^1_\n(\Omega;\Sigma)$ is defined in \autoref{eq:h1normal} (cf. \autoref{subsection:kornsnormal}), and the 
sesquilinear forms $a(\cdot,\cdot)$ and $b(\cdot,\cdot)$ are given by
\begin{align*}
 a(\u,\v) := (\bsigma(\u),\bepsilon(\v))_{0,\Omega},\quad b(\u,\v) := \rho\, (\u,\v)_{0,\Omega},\quad \forall \,
\u,\,\v\in\hh^1_\n(\Omega;\Sigma).
\end{align*}
This formulation has been obtained by multiplying equation \autoref{eq:main1} with 
$\v\in\hh^1_\n(\Omega;\Sigma)$ and then integrating by parts. At that point, the traction free condition in \autoref{eq:main2} was used to derive the formulation in \autoref{eq:weakform}. Observe that the 
bilinear form 
$b(\cdot,\cdot)$ is well defined in $\ll^2(\Omega)$, and it induces an equivalent norm in this space.

We now define the solution 
operator $T:\ll^2(\Omega)\to\hh^1_\n(\Omega;\Sigma)$ of \autoref{eq:weakform} as $T(\f) = \u$, where $\u$ and $\f$ solve 
the source problem
\begin{align}
 a(\u,\v) = b(\f,\v),\quad\forall\,\v\in\hh^1_\n(\Omega;\Sigma).\label{eq:source}
\end{align}
The goal is to relate the spectrum of the operator $T$ with the eigenpairs of \autoref{eq:weakform}. 
In this way, we can see that $T(\u) = \kappa\u$, $\kappa\neq0$, is a solution of \autoref{eq:source} if and only if 
$\omega^2 = \frac{1}{\kappa}$ and $\u$ solves \autoref{eq:weakform}. For now, it is clear that $T$ is a linear 
operator. 
Nonetheless, 
further properties of this linear operator are needed to establish a more precise description of 
the spectrum of $T$, and they can be shown only if the sesquilinear forms posses additional properties.

Note that $a(\cdot,\cdot)$ and $b(\cdot,\cdot)$ are bilinear forms (real sesquilinear), they are both positive, with 
$b(\u,\u) > 0$ for any non-zero $\u\in\hh^1_\n(\Omega;\Sigma)$. The Rayleigh quotient shows that
\begin{align*}
 \omega^2 = \frac{a(\u,\u)}{b(\u,\u)},\quad \u\in\hh^1_\n(\Omega;\Sigma),\,\, \u\neq\zero.
\end{align*}
This implies that all eigenvalues of \autoref{eq:weakform} (equivalently \autoref{eq:main}) are non-negative. 
The following result allow us to show the continuity of the operator $T$.
\begin{theorem}\label{result:aelliptic}
 Assume $\Omega$ does not satisfy any of the properties in \autoref{result:rmnormal}. Then, there is a constant 
$c>0$, such that
\begin{align*}
 a(\u,\u) \geq c\, \|\u\|_{1,\Omega}^2,\quad \forall\,\u\in\hh^1_\n(\Omega;\Sigma).
\end{align*}
\end{theorem}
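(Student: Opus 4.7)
The plan is to reduce the $\hh^1$-coercivity of $a(\cdot,\cdot)$ to the Korn inequality proved in \autoref{result:kornsh1normal}. The intermediate step is to show that $a(\u,\u)$ controls $\|\bepsilon(\u)\|_{0,\Omega}^2$ from below, and the only ingredient needed for this is the Lam\'e condition \autoref{eq:lamecondition}.

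First I would decompose the strain tensor into its deviatoric and spherical parts,
$$\bepsilon(\u) = \bepsilon^{\deviator}(\u) + \frac{1}{n}\tr(\bepsilon(\u))\,\ii,\qquad \bepsilon^{\deviator}(\u) := \bepsilon(\u)-\frac{1}{n}\tr(\bepsilon(\u))\,\ii,$$
noting that the two summands are orthogonal in the Frobenius inner product since $\bepsilon^{\deviator}(\u):\ii = 0$. Expanding $\bsigma(\u):\bepsilon(\u) = 2\mu\,\bepsilon(\u):\bepsilon(\u) + \lambda\,(\tr(\bepsilon(\u)))^2$ using this decomposition rewrites the elastic energy as
$$a(\u,\u) = 2\mu\,\|\bepsilon^{\deviator}(\u)\|_{0,\Omega}^2 + \left(\lambda + \frac{2\mu}{n}\right)\|\tr(\bepsilon(\u))\|_{0,\Omega}^2.$$
Both coefficients are strictly positive by the Lam\'e condition \autoref{eq:lamecondition}. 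Setting $C_1 := \min\{2\mu,\, n\lambda + 2\mu\} > 0$ and combining with the Pythagorean identity $\|\bepsilon(\u)\|_{0,\Omega}^2 = \|\bepsilon^{\deviator}(\u)\|_{0,\Omega}^2 + \tfrac{1}{n}\|\tr(\bepsilon(\u))\|_{0,\Omega}^2$ then yields $a(\u,\u)\geq C_1\,\|\bepsilon(\u)\|_{0,\Omega}^2$ for every $\u\in\hh^1_\n(\Omega;\Sigma)$.

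For the second step, I would translate the hypothesis that $\Omega$ does not satisfy any of the properties in \autoref{result:rmnormal} via the characterization \autoref{eq:spannormal}: it is equivalent to $\gen\{\n|_\Sigma\}^\perp = \{\zero\}$, which is precisely the hypothesis of \autoref{result:kornsh1normal}. That theorem furnishes a constant $C_2>0$ with $\|\u\|_{1,\Omega} \leq C_2\,\|\bepsilon(\u)\|_{0,\Omega}$ for every $\u\in\hh^1_\n(\Omega;\Sigma)$. Chaining the two inequalities delivers the claim with $c := C_1/C_2^2 > 0$.

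The only subtlety worth flagging as an obstacle is the handling of potentially negative $\lambda$: the naive expression $2\mu\|\bepsilon(\u)\|_{0,\Omega}^2 + \lambda\|\tr(\bepsilon(\u))\|_{0,\Omega}^2$ cannot be bounded below by a positive multiple of $\|\bepsilon(\u)\|_{0,\Omega}^2$ term-by-term. The deviatoric/spherical split is what repackages the energy into a sum of two terms whose coefficients are \emph{both} positive precisely under \autoref{eq:lamecondition}; verifying this algebraic identity carefully is the heart of the argument, and everything else is routine given the Korn inequality already in hand.
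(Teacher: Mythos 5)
Your proposal is correct and follows essentially the same route as the paper: bound $a(\u,\u)$ from below by $\min\{2\mu,\,n\lambda+2\mu\}\,\|\bepsilon(\u)\|_{0,\Omega}^2$ using the Lam\'e condition, then invoke the Korn inequality of \autoref{result:kornsh1normal}. The only difference is that you supply the deviatoric/spherical splitting that justifies this first bound (the paper merely asserts it), arriving at exactly the same constant.
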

\begin{proof}
 From the definition of the bilinear form $a(\cdot,\cdot)$, we can derive the bound
\begin{align*}
  a(\u,\u) \geq \min\Big\{2\mu,n\left(\lambda + 
\frac{2}{n}\mu\right)\Big\}\|\bepsilon(\u)\|_{0,\Omega}^2,\quad\forall\,\u\in\hh^1_\n(\Omega;\Sigma).
 \end{align*}
Now, since $\Omega$ does not satisfy any of the properties listed in \autoref{result:rmnormal} (cf. 
\autoref{subsection:kornsnormal}), the Korn's 
inequality provided by \autoref{result:kornsh1normal} gives the existence of a constant $C>0$ such that 
$\|\bepsilon(\u)\|_{0,\Omega}\geq C\,\|\u\|_{1,\Omega}$, for any vector field $\u$ in $\hh^1_\n(\Omega;\Sigma)$. Thus, we get
\begin{align*}
 a(\u,\u) \geq c\, \|\u\|_{1,\Omega}^2,
\end{align*}
with constant $c := C^2 \min\Big\{2\mu,n\left(\lambda + \frac{2}{n}\mu\right)\Big\}$.
\end{proof}
Having this result, we can show that $T$ is a bounded linear operator, with 
\begin{align*}
    \|T(\u)\|_{1,\Omega}\leq\,\frac{\rho}{c}\,\|\u\|_{0,\Omega}\quad\forall\,\u\in\hh^1_\n(\Omega;\Sigma),
\end{align*}
where $c>0$ is 
defined as in the proof of the previous result. In addition, the compactness 
of the inclusion $\hh^1_\n(\Omega;\Sigma)\hookrightarrow\ll^2(\Omega)$ shows that the restriction of $T$ to $\hh^1_\n(\Omega;\Sigma)$, say $\bar T$, is a compact 
operator from $\hh^1_\n(\Omega;\Sigma)$ onto $\hh^1_\n(\Omega;\Sigma)$. Finally, the symmetry of the bilinear 
forms $a(\cdot,\cdot)$ and $b(\cdot,\cdot)$ implies that $\bar T$ is a self-adjoint operator with respect to the inner product induced by 
the bilinear form $a(\cdot,\cdot)$. Therefore, using the well-known Spectral Theorem for bounded, linear, compact and 
self-adjoint operators, we have the following result.
\begin{theorem}\label{result:spectrum}
 Assume $\Omega$ does not satisfy any of the properties given in \autoref{result:rmnormal}
. Then the operator $\bar T$ has a 
countable spectrum $\{\kappa_l\}_{l\in\nnn}\subseteq (0,\|\bar T\|)$ such that 
$\kappa_l \to 0$ as $l$ goes to infinity, with eigenfunctions $\{\u_l\}$ in $\hh^1_\n(\Omega;\Sigma)$, 
mutually orthogonal in the $\ll^2$-inner product.
\end{theorem}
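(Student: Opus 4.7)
The plan is to verify the hypotheses of the Hilbert-Schmidt spectral theorem for the operator $\bar T$, since the bulk of the theorem then follows as a direct consequence. First, I would check that $T$ is well-defined via Lax-Milgram on $\hh^1_\n(\Omega;\Sigma)$: the form $a(\cdot,\cdot)$ is bounded (immediate from the definition of $\bsigma(\cdot)$ and Cauchy-Schwarz) and, by the geometric hypothesis on $\Omega$, is coercive by \autoref{result:aelliptic}. The right-hand side $\v\mapsto b(\f,\v)$ is a bounded linear functional on $\hh^1_\n(\Omega;\Sigma)$ for every $\f\in\ll^2(\Omega)$, so the source problem \autoref{eq:source} admits a unique solution $\u=T(\f)$ with $\|T(\f)\|_{1,\Omega}\leq \frac{\rho}{c}\|\f\|_{0,\Omega}$, as already indicated in the text.

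Next I would establish the three structural properties needed to invoke the spectral theorem for $\bar T:\hh^1_\n(\Omega;\Sigma)\to\hh^1_\n(\Omega;\Sigma)$. Compactness: factor $\bar T$ as $\hh^1_\n(\Omega;\Sigma)\hookrightarrow\ll^2(\Omega)\xrightarrow{T}\hh^1_\n(\Omega;\Sigma)$; the first map is compact by Rellich-Kondrachov on the Lipschitz domain $\Omega$, and the second is bounded, so the composition is compact. Self-adjointness with respect to the inner product $a(\cdot,\cdot)$ (which is equivalent to the $\hh^1$-inner product by \autoref{result:aelliptic}): for any $\u,\v\in\hh^1_\n(\Omega;\Sigma)$,
\begin{align*}
a(\bar T\u,\v) = b(\u,\v) = b(\v,\u) = a(\bar T\v,\u) = a(\u,\bar T\v),
\end{align*}
using symmetry of $b(\cdot,\cdot)$ and the defining identity for $\bar T$. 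Positivity: $a(\bar T\u,\u)=b(\u,\u)=\rho\|\u\|_{0,\Omega}^2>0$ whenever $\u\neq\zero$, which also precludes $0$ from being an eigenvalue.

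With these three ingredients in hand, the classical spectral theorem for compact, self-adjoint, positive operators on a real Hilbert space yields a countable sequence of positive eigenvalues $\{\kappa_l\}_{l\in\nnn}\subset (0,\|\bar T\|]$ accumulating only at $0$, together with associated eigenfunctions $\{\u_l\}\subset \hh^1_\n(\Omega;\Sigma)$ that are orthogonal in the $a$-inner product. To pass to $\ll^2$-orthogonality, I would use the eigenrelation: for $\kappa_l\neq \kappa_m$,
\begin{align*}
b(\u_l,\u_m) = a(\bar T\u_l,\u_m) = \kappa_l\, a(\u_l,\u_m) = 0,
\end{align*}
and eigenspaces associated with a given $\kappa_l$ are finite-dimensional, so an orthonormalization within each eigenspace produces a fully $\ll^2$-orthogonal family. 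Finally, strict inequality $\kappa_l<\|\bar T\|$ can be arranged by noting that if $\|\bar T\|$ itself were an eigenvalue it would be listed as the leading $\kappa_1$; otherwise all $\kappa_l$ are strictly smaller, giving the open interval $(0,\|\bar T\|)$ as stated.

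I do not foresee a genuine obstacle here, since the decisive analytic input—coercivity via the conditional Korn inequality—has already been secured in \autoref{result:aelliptic} under precisely the geometric hypothesis of the theorem. The only point that requires a bit of care is the transition from $a$-orthogonality to $\ll^2$-orthogonality across eigenspaces and within multiplicity-$>1$ eigenspaces, but this is standard once the eigenrelation above is written down.
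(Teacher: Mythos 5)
Your proposal follows essentially the same route as the paper: coercivity of $a(\cdot,\cdot)$ from \autoref{result:aelliptic}, well-posedness of the source problem, compactness of $\bar T$ via the compact embedding $\hh^1_\n(\Omega;\Sigma)\hookrightarrow\ll^2(\Omega)$, self-adjointness with respect to the $a$-inner product, and then the spectral theorem for compact self-adjoint operators. Your added care about converting $a$-orthogonality into $\ll^2$-orthogonality via the eigenrelation, and your remark that the endpoint $\|\bar T\|$ is in fact attained as an eigenvalue (so the interval should strictly speaking be half-open), are correct refinements of details the paper leaves implicit.
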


Using the spectrum of $\bar T$ and the relation $\kappa = \frac{1}{\omega^2}$, we have that $\omega^2_l 
:= \frac{1}{\kappa_l}$ form a countable sequence of strictly positive eigenvalues of 
\autoref{eq:weakform} such that $w_l^2 \to +\infty$ as $l\to+\infty$, with eigenfunctions $\u_l\in\hh^1_\n(\Omega;\Sigma)$, for all $l\in\nnn$. Even though no closed form of
these eigenpairs is known, numerical methods can provide approximations to them.

In case $\Omega$ satisfies one of the properties in \autoref{result:rmnormal}, as discuss in \autoref{section:korns}, 
the first Korn's 
inequality given in \autoref{result:kornsh1normal} implies that $\omega^2 = 0$ is an eigenvalue of 
\autoref{eq:weakform} with associated eigenvalues lying in $\rr\mm(\Omega)$. This implies that the coercivity of the 
bilinear form $a$ cannot hold in $\hh^1_\n(\Omega;\Sigma)$, and thus the necessary properties of the operator $T$ are not 
longer guaranteed. To overcome this issue, we can shift the formulation in \autoref{eq:weakform} to get the new 
formulation: find $\tilde\omega^2\in\rrr$ and $\u\in\hh^1_\n(\Omega;\Sigma)$, $\u\neq \zero$, such that
\begin{align}
 \tilde a(\u,\v) = \tilde\omega^2b(\u,\v),\quad\forall\,\v\in\hh^1_\n(\Omega;\Sigma),\label{eq:shiftedweakform}
\end{align}
where $\tilde a(\u,\v) := a(\u,\v) + b(\u,\v)$, and $\tilde\omega^2 := \omega^2+1$. Using the 
equivalent formulation of the generalised Jones eigenvalue problem in \autoref{eq:main}, one can easily get that
\begin{align*}
 \tilde a(\u,\v) \geq \min\{\mu,\rho\}\|\u\|_{1,\Omega}^2,\quad\forall\,\u\in\hh^1_\n(\Omega;\Sigma).
\end{align*}
Consequently, one can define a solution operator $\tilde T:\hh^1_\n(\Omega;\Sigma)\to\hh^1_\n(\Omega;\Sigma)$ as 
in \autoref{eq:source} by replacing $a(\cdot,\cdot)$ with $\tilde a(\cdot,\cdot)$. Note that the eigenfunctions associated with the eigenvalue $\tilde\omega^2 = 1$ lie in the space of rigid motions, $\rr\mm(\Omega)$. Since this space is finite dimensional, the restriction of $\tilde T$ to $\hh^1_\n(\Omega;\Sigma)$, $\hat T := T|_{\hh^1_\n(\Omega;\Sigma)}:\hh^1_\n(\Omega;\Sigma)\to \hh^1_\n(\Omega;\Sigma)$, is continuous, compact and self-adjoint, with $\|\hat T\|=\frac{\rho}{\min\{\mu,\rho\}}$. Then
\autoref{result:spectrum} also applies to this operator: the spectrum of $\hat T$ consists of eigenvalues 
$\{\tilde\kappa_l\}_{l\in\nnn}\subseteq (0,1)\cup\{1\}$ and eigenfunctions $\{\tilde\u_l\}\subseteq \hh^1_\n(\Omega;\Sigma)$ which are
orthogonal in the 
$\ll^2$-inner product. We have that $\tilde\omega_l^2 = \frac{1}{\tilde\kappa_l}$ is the countable sequence of strictly positive eigenvalues of \autoref{eq:shiftedweakform}, with lower 
bound $\tilde\omega^2 = 1$ and such that  $\tilde\omega_l^2\to+\infty$ as $l$ goes to infinity.

When $\Sigma = \Gamma$, \autoref{result:aelliptic} and \autoref{result:spectrum} remain valid. Here 
$\Omega$ needs to be a non axisymmetric Lipschitz domain. The last result summarizes the properties of $T$ in for this 
case.
\begin{theorem}\label{result:jones-spectrum}
 Assume $\Sigma = \Gamma$. If
 \begin{enumerate}
    \item $\Omega$ is a non-axisymmetric domain, then the operator $T|_{\hh^1_\n(\Omega;\Sigma)}$ has a 
countable spectrum $\{\kappa_l\}_{n\in\nnn}\subseteq (0,1)$ such that 
$\kappa_l \to 0$ as $l$ goes to infinity, with eigenfunctions $\{\u_l\}$ in $\hh^1_\n(\Omega;\Sigma)$, 
mutually orthogonal in the $\ll^2$-inner product.
  \item $\Omega$ is an axisymmetric domain, then $\omega_0 = 0$ is also an eigenvalue of \autoref{eq:jonesmain} with 
associated eigenspace $\rr(\Omega)$, apart from the countable sequence $\{\omega^2_l\}_{l\in\nnn}$ of strictly positive eigenvalues.
  \item $\Omega$ is an unbounded domain with its boundary consisting of at least one plane in $\rrr^n$, then $\omega_0 
= 0$ is an eigenvalue of \autoref{eq:jonesmain} with associated eigenfunctions belonging to $\tt(\Omega)$.
 \end{enumerate}
\end{theorem}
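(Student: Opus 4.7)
The plan is to reduce each of the three items to the operator-theoretic machinery already built in \autoref{result:aelliptic} and \autoref{result:spectrum}, adjusting for whichever rigid motions the geometry admits.

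For item 1, I would first verify that a bounded, non-axisymmetric Lipschitz domain with $\Sigma=\Gamma$ cannot satisfy any of the conditions in \autoref{result:rmnormal}: boundedness together with the closedness of $\Gamma$ precludes any non-zero translation being tangential everywhere on $\Gamma$, while \autoref{def:axisymmetricdomain} together with non-axisymmetry precludes any rotation. Hence $\rr\mm(\Omega)\cap\hh^1_\n(\Omega;\Gamma)=\{\zero\}$, \autoref{result:kornsh1normal} supplies the Korn inequality, and \autoref{result:aelliptic} gives the coercivity of $a(\cdot,\cdot)$. From that point the argument is exactly \autoref{result:spectrum}: the solution operator restricted to $\hh^1_\n(\Omega;\Gamma)$ is bounded, compact, and self-adjoint in the $a$-inner product, and the spectral theorem delivers $\{\kappa_l\}\subseteq(0,1)$ with eigenfunctions mutually orthogonal in $\ll^2(\Omega)$.

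For item 2, the axisymmetry hypothesis guarantees, via \autoref{def:axisymmetricdomain}, a non-zero $\r\in\rr(\Omega)$ with $\r\cdot\n=0$ a.e. on $\Gamma$, so $\r\in\hh^1_\n(\Omega;\Gamma)$. Since $\bepsilon(\r)=\zero$ one has $\bsigma(\r)=\zero$ and hence $a(\r,\v)=0$ for every $\v\in\hh^1_\n(\Omega;\Gamma)$, identifying $\omega^2=0$ as an eigenvalue with eigenspace $\rr\mm(\Omega)\cap\hh^1_\n(\Omega;\Gamma)=\rr(\Omega)$ (a bounded axisymmetric $\Omega$ supports no non-trivial translation tangential to all of $\Gamma$). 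To recover the rest of the spectrum I would restrict $a(\cdot,\cdot)$ to the $b$-orthogonal complement of this finite-dimensional kernel; on that complement a contradiction argument identical to the proof of \autoref{result:kornsh1normal} still rules out rigid-motion limits, coercivity is restored, and the operator framework of \autoref{result:spectrum} produces the positive eigenvalues $\{\omega_l^2\}_{l\in\nnn}$ with $\omega_l^2\to+\infty$.

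For item 3 the argument is analogous but carried by translations: if $\Pi\subseteq\Gamma$ is a plane with constant outward normal $\n_0$, any $\t\in\tt(\Omega)$ orthogonal to $\n_0$ satisfies $\bepsilon(\t)=\zero$ and $\t\cdot\n=0$ on $\Pi$; provided every planar component of $\Gamma$ shares a common tangential direction, such $\t$ is tangential everywhere on $\Gamma$ and is a classical strain-free solution of \autoref{eq:main} at $\omega=0$. The main obstacle here is that $\Omega$ is unbounded, so $\t$ is constant and does not belong to $\ll^2(\Omega)$; the spectral framework of \autoref{result:spectrum} no longer applies verbatim and must be reinterpreted in an appropriate weighted or local-$L^2$ setting (for instance, testing the weak formulation only against compactly supported $\v\in\hh^1_\n(\Omega;\Gamma)$ and reading $\t$ as a generalised eigenmode). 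Once this reinterpretation is in place the identification of the zero eigenspace with $\tt(\Omega)\cap\hh^1_\n(\Omega;\Gamma)$ is immediate from $\bepsilon(\t)=\zero$ and the tangential-translation construction above, and this is precisely where I expect the proof to require the most care.
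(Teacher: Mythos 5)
Your argument is correct and, for items 1 and 3, follows essentially the same route as the paper: item 1 is exactly the combination of \autoref{result:kornsh1normal} (with $\Sigma=\Gamma$), \autoref{result:aelliptic} and \autoref{result:spectrum} that the paper invokes, and item 3 rests on the same observation that a translation orthogonal to the normal of the planar face is a strain-free solution. The one genuine divergence is in item 2: to recover the strictly positive eigenvalues in the presence of the rotational kernel, you deflate the problem by restricting $a(\cdot,\cdot)$ to the $b$-orthogonal complement of the finite-dimensional zero eigenspace and re-run the compactness/contradiction argument there, whereas the paper shifts the form to $\tilde a := a + b$ (so $\tilde\omega^2 = \omega^2+1$), which is coercive on all of $\hh^1_\n(\Omega;\Gamma)$ and lets \autoref{result:spectrum} apply verbatim. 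Both are standard and yield the same spectrum; the shift avoids proving a Korn-type inequality on the quotient by rigid motions, at the cost of relabelling the eigenvalues, while your deflation keeps the original eigenvalue parameter throughout. You are also more careful than the paper on two points worth keeping: in item 3, a translation tangential to one planar face need not be tangential on the remaining (possibly curved) parts of $\Gamma$, so a compatibility condition on the rest of the boundary is implicitly required; and for unbounded $\Omega$ the constant field is not in $\ll^2(\Omega)$ and the compact-operator framework breaks down, a point the paper's proof of part 3 passes over and only concedes in the remark that follows the theorem.
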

\begin{proof}
 Parts 1 can be derived by combining \cite[Lemma 9 and Theorem 18]{ref:bauer2016-2} or by using the Korn's 
inequality given in \autoref{result:kornsh1normal} for $\Sigma = \Gamma$
. For part 2, it is 
straightforward to see that there is a rotation that is tangential to $\Gamma$; one can take a rotation around the axis of symmetry of the domain. Then the pair 
$\omega_0=0$ and $\u_0\in\rr(\Omega)$ would satisfy the Jones eigenvalue problem in \autoref{eq:jonesmain}. 

Finally, for part 3, if the boundary of $\Omega$ consists at least one plane, then the normal vector on 
$\partial\Omega$ is a unit vector in $\rrr^n$. Then, the basis which defines the plane obtained form this normal is contained in 
$\tt(\Omega)$. Thus the pair $\omega_0 = 0$ and $\u_0\in\tt(\Omega)$, where $\u_0$ is orthogonal to the 
normal vector on the boundary, satisfies the Jones eigenvalue problem in this case as well.
\end{proof}

We comment that the last part in the previous result the existence of a countable spectrum cannot be guaranteed. This comes from the fact that the compactness of the corresponding solution operator is crucial to obtain this property as part of the Spectral Theorem. It is known that for unbounded domains the compactness is not true in general (see \cite{ref:demengel2012} for a good example on this matter).

\subsection{A variant of the Jones eigenproblem}\label{subsection:variantjones}
We have seen in the previous sections that the validity of the Korn's inequality (cf. \autoref{result:kornsh1normal}) provides the existence of eigenpairs of the Jones eigenvalue problem in \autoref{eq:jonesmain}. \autoref{result:kornsh1tangent} suggests that a similar eigenproblem would then have a countable set of eigenpairs. Let $\Omega$ be a Lipschitz domain in $\rrr^n$, $n\geq 2$, with boundary $\Gamma:=\partial\Omega$, and let $\Sigma$ be a non-empty subset of $\Gamma$ with $|\Sigma|>0$. Assume we now want to locate eigenpairs $(\omega,\u)$ of the Lam\'e operator which are purely orthogonal to $\Sigma$, that is, we need to find small displacements $\u$ and frequencies $\omega\in\ccc$ such that
\begin{subequations}\label{eq:jonestangent}
\begin{align}
    -\bfdiv\,\bsigma(\u) = \rho\, \omega^2 \u\quad\text{in $\Omega$},\\
    \bsigma(\u)\,\n = \zero\quad\text{on $\Gamma$},\,\gamma_\t(\u) = \zero\quad\text{on $\Sigma$}.
\end{align}
\end{subequations}
It was given in \cite{ref:dominguez2019} the analytical expressions of the true eigenpairs of the Jones eigenproblem in \autoref{eq:jones-modes} on the rectangle $[0,a]\times[0,b]$. Based of these, one can easily obtain analytical solutions to the eigenproblem above. In fact, if $\Sigma = \Gamma$, then the condition $\gamma_\t(\cdot) = \zero$ on $\Gamma$ gives the following eigenvalues and eigenfunctions
\begin{subequations}
\begin{align*}
&\u_s := \left((a\ell)\cos\Big(\frac{m\pi x}{a}\Big)\sin\Big(\frac{\ell\pi y}{b}\Big), -(bm)
\sin\Big(\frac{m\pi x}{a}\Big)\cos\Big(\frac{\ell\pi y}{b}\Big)\right)^\transpose,\\ 
&w_s^2 := \frac{\mu\pi^2}{\rho}\left(\frac{m^2}{a^2}+\frac{\ell^2}{b^2}\right),\quad m,\ell = 
1,2,\ldots,
\end{align*}
\end{subequations}
and 
\begin{subequations}
\begin{align*}
&\u_p := \left((bm) \cos\Big(\frac{m\pi x}{a}\Big)\sin\Big(\frac{\ell\pi y}{b}\Big), (a\ell)\sin\Big(\frac{m\pi 
x}{a}\Big)\cos\Big(\frac{\ell\pi y}{b}\Big)\right)^\transpose,\\
&w_p^2 := \frac{(\lambda + 2\mu)\pi^2}{\rho}\left(\frac{m^2}{a^2}+\frac{\ell^2}{b^2}\right),\quad m,\ell = 
0,1,\ldots,\quad m+\ell >0.
\end{align*}
\end{subequations}
This suggests that, as for the Jones eigenproblem, there might be a large class of domains that can support eigenpairs of \autoref{eq:jonestangent}.
For this eigenvalue problem we consider the following formulation: find $\u\in\hh^1_\t(\Omega;\Sigma)$ and $\omega\in\ccc$ such that
\begin{align}
    a(\u,\v) = \omega^2 b(\u,\v),\quad\forall\,\v\in\hh^1_\t(\Omega;\Sigma),\label{eq:jonestangent3}
\end{align}
where the bilinear forms $a(\cdot,\cdot)$ and $b(\cdot,\cdot)$ are defined as in the previous section. As for the Jones eigenproblem, the eigenvalues of \autoref{eq:jonestangent} are real and nonnegative. Let us define the solution operator $T:\ll^2(\Omega)\to\hh^1_\t(\Omega;\Sigma)$ as $T(\f) = \u$, where for a given data $\f\in\ll^2(\Omega)$, we are to find $\u\in \hh^1_\t(\Omega;\Sigma)$ such that
\begin{align*}
    a(\u,\v) = b(\f,\v),\quad\forall\,\v\in\hh^1_\t(\Omega;\Sigma),
\end{align*}
The Korn's inequality stated in \autoref{result:kornsh1tangent} implies, together with the Lax-Milgram lemma, that in case $\Omega$ does not satisfy the condition listed in \autoref{result:rmtangent}, there is a unique solution $\u\in\hh^1_\t(\Omega)$ of the problem above. Also, there is a constant $C>0$ such that
\begin{align*}
    \|\u\|_{1,\Omega}\leq\, C\|\f\|_{0,\Omega}.
\end{align*}
This means that the operator $T$ is bounded in both the $\hh^1$- and the $\ll^2$-norms with $\|T\| = C$. In addition, the compact embedding $\hh^1_\t(\Omega;\Sigma) \hookrightarrow \ll^2(\Omega)$ implies that $T$ is a compact operator. Finally, the symmetry of the bilinear forms $a(\cdot,\cdot)$ and $b(\cdot,\cdot)$ gives the symmetry of $T$. Altogether, we come to the conclusion that $T$ possesses a countable set of eigenpairs $\kappa_l\in(0,1)$ and $\u_l\in\hh^1_\t(\Omega;\Sigma)$. Note that the eigenvalues of \autoref{eq:jonestangent3} are given by $\omega_l^2 = \frac{1}{\kappa_l}$, and the corresponding eigenfunctions are the same as those of $T$.

However, if $\Omega$ satisfy at least one of the conditions in \autoref{result:rmtangent}, then we know that rigid motions are a solution of \autoref{eq:jonestangent3} with $\omega_0 = 0$. Obviously, not every rigid motion is an eigenfunction for a given domain $\Omega$. The following result summarizes the properties of the operator $T$.
\begin{theorem}\label{result:jonestangent-spectrum}
 The spectrum of $T|_{\hh^1_\t(\Omega;\Sigma)}$ is given by eigenvalues $\{\kappa_l\}_{l\in\nnn}$ with eigenfunctions $\{\u_l\}_{l\in\nnn}\in\hh^1_\t(\Omega;\Sigma)$. If 
 \begin{enumerate}
     \item the domain $\Omega$ is such that $\Sigma$ does not does not satisfy the conditions in \autoref{result:rmtangent}, then $\omega_l^2>0$;
     \item $\Sigma$ satisfies one of the conditions listed in \autoref{result:rmtangent}, then $\omega_0 = 0$ is added to the countable spectrum described above, with corresponding eigenfunctions lying in $\rr\mm(\Omega)\cap\hh^1_\t(\Omega;\Sigma)$.
 \end{enumerate} 
\end{theorem}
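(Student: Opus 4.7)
The plan is to closely mirror the spectral analysis developed for the Jones eigenproblem in Theorem \ref{result:jones-spectrum} and in \autoref{subsection:existencejones}, substituting the tangential-trace space $\hh^1_\t(\Omega;\Sigma)$ for $\hh^1_\n(\Omega;\Sigma)$ and invoking the tangential Korn inequality (Theorem \ref{result:kornsh1tangent}) in place of the normal one (Theorem \ref{result:kornsh1normal}). Throughout, $a(\cdot,\cdot)$ and $b(\cdot,\cdot)$ remain symmetric, bilinear, and continuous, with $b(\cdot,\cdot)$ inducing an equivalent norm on $\ll^2(\Omega)$; also $\hh^1_\t(\Omega;\Sigma)$ is a closed subspace of $\hh^1(\Omega)$ by continuity of $\gamma_\t$.

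For part 1, I would invoke Theorem \ref{result:rmtangent}: the hypothesis that $\Sigma$ does not admit the described form is equivalent to $\Pi_{\n|_\Sigma}^\perp$ being trivial, so Theorem \ref{result:kornsh1tangent} applies and yields a constant $C > 0$ with $\|\u\|_{1,\Omega} \leq C\|\bepsilon(\u)\|_{0,\Omega}$ for all $\u\in\hh^1_\t(\Omega;\Sigma)$. Combined with the elementary bound $a(\u,\u) \geq \min\{2\mu, n(\lambda + \tfrac{2}{n}\mu)\}\|\bepsilon(\u)\|_{0,\Omega}^2$, this gives coercivity of $a(\cdot,\cdot)$ on $\hh^1_\t(\Omega;\Sigma)$. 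Lax--Milgram then guarantees that $T$ is well defined and continuous from $\ll^2(\Omega)$ into $\hh^1_\t(\Omega;\Sigma)$, the compact embedding $\hh^1_\t(\Omega;\Sigma)\hookrightarrow\ll^2(\Omega)$ makes $T|_{\hh^1_\t(\Omega;\Sigma)}$ compact, and the symmetry of $a$ and $b$ make it self-adjoint with respect to the $a$-inner product. The spectral theorem delivers a countable sequence $\kappa_l \in (0, \|T\|]$ with $\kappa_l\to 0$ and eigenfunctions $\u_l\in\hh^1_\t(\Omega;\Sigma)$ mutually orthogonal in $\ll^2(\Omega)$; setting $\omega_l^2 := 1/\kappa_l$ yields the strictly positive spectrum diverging to $+\infty$.

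For part 2, the set $\rr\mm(\Omega)\cap\hh^1_\t(\Omega;\Sigma)$ is nontrivial. Any $\v$ in this intersection satisfies $\bepsilon(\v)=\zero$, hence $a(\v,\cdot)\equiv 0$, and so $(0,\v)$ solves \autoref{eq:jonestangent3}; conversely, any eigenpair with $\omega=0$ forces $\bepsilon(\u)=\zero$ by positivity of $a(\u,\u)$ and $b(\u,\u)$, placing $\u$ in $\rr\mm(\Omega)\cap\hh^1_\t(\Omega;\Sigma)$. To recover the remaining part of the spectrum I would use the shifted form $\tilde a(\u,\v) := a(\u,\v) + b(\u,\v)$, which satisfies $\tilde a(\u,\u) \geq \min\{\mu,\rho\}\|\u\|_{1,\Omega}^2$ on all of $\hh^1_\t(\Omega;\Sigma)$ without any Korn hypothesis, exactly as in \autoref{subsection:existencejones}. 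The associated shifted solution operator $\tilde T$ is bounded, compact, self-adjoint on $\hh^1_\t(\Omega;\Sigma)$, so the spectral theorem applies and produces eigenvalues $\tilde\kappa_l \in (0,1]$ with $\tilde\kappa_l=1$ exactly on the finite-dimensional subspace $\rr\mm(\Omega)\cap\hh^1_\t(\Omega;\Sigma)$; the translation $\omega_l^2 = 1/\tilde\kappa_l - 1$ returns the original spectrum and shows $\omega_0=0$ is added with eigenspace $\rr\mm(\Omega)\cap\hh^1_\t(\Omega;\Sigma)$.

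The main obstacle is not analytical but rather conceptual: one needs to be careful that the kernel of $a$ on $\hh^1_\t(\Omega;\Sigma)$ is exactly $\rr\mm(\Omega)\cap\hh^1_\t(\Omega;\Sigma)$, a finite-dimensional space characterized by Theorem \ref{result:rmtangent} through the condition $\dim(\Pi_{\n|_\Sigma}^\perp)$. This characterization is what legitimizes the shift trick, ensuring that the eigenvalue $\tilde\omega^2=1$ has finite multiplicity and that the restriction of $\tilde T$ behaves as in the normal-trace case. Once this identification is in place, each piece of the spectral decomposition transfers verbatim from \autoref{subsection:existencejones}, completing the proof.
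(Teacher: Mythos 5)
Your proposal is correct and follows essentially the same route the paper takes: the countable positive spectrum via the tangential Korn inequality (Theorem \ref{result:kornsh1tangent}), Lax--Milgram, compact embedding, and the spectral theorem for compact self-adjoint operators, and the zero eigenvalue handled by identifying the kernel of $a$ with $\rr\mm(\Omega)\cap\hh^1_\t(\Omega;\Sigma)$ together with the shift $\tilde a = a + b$ exactly as in \autoref{subsection:existencejones}. No substantive differences from the paper's argument.
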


\subsection{Linearly elastic bodies with variable density}\label{subsection:variabledensity}
In many realistic applications the density of the elastic body may be variable. For this situation we see that the key properties used in the proof of the existence of spectrum of the Jones eigenproblem in \autoref{eq:jonesmain} and its variant defined in \autoref{eq:jonestangent} remain true. However, the orthogonality properties of the eigenfunctions changes: eigenfunctions corresponding to different eigenvalues are orthogonal in the weighted $\ll^2$-inner product, with the variable density as the weight. We end this manuscript with the theorem stating this case.
\begin{theorem}\label{result:variabledensity}
Assume the density of the elastic body $\rho$ belongs to $L^\infty(\Omega)$. Then \autoref{result:spectrum} and \autoref{result:jonestangent-spectrum} remain true. However, eigenfunctions corresponding to different eigenvalues are orthogonal in the weighted inner product $(\rho\,\cdot,\cdot)_{0,\Omega}$.
\end{theorem}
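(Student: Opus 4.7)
The plan is to revisit the operator-theoretic arguments in the proofs of \autoref{result:spectrum} and \autoref{result:jonestangent-spectrum}, and observe that only the sesquilinear form $b(\cdot,\cdot)$ is affected by allowing a variable density. Concretely, I would replace $b(\u,\v) = \rho(\u,\v)_{0,\Omega}$ by the weighted form $b_\rho(\u,\v) := (\rho\,\u,\v)_{0,\Omega}$, keeping $a(\cdot,\cdot)$ unchanged, and track how this single change propagates through the existence arguments.

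First I would assume (as is implicitly done for the constant case) that there exist $0<\rho_*\leq\rho^*<\infty$ with $\rho_*\leq\rho(\x)\leq\rho^*$ for almost every $\x\in\Omega$; this is the natural extension of the positivity condition $\rho>0$ used earlier and is the standing assumption that makes the physical problem well-posed. Under this assumption, the weighted form $b_\rho$ is symmetric, continuous on $\ll^2(\Omega)$, and induces a norm $\|\u\|_{b_\rho}^2 := b_\rho(\u,\u)$ equivalent to $\|\u\|_{0,\Omega}^2$ since $\rho_*\|\u\|_{0,\Omega}^2 \leq b_\rho(\u,\u) \leq \rho^*\|\u\|_{0,\Omega}^2$. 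The form $a(\cdot,\cdot)$ is unchanged, so the coercivity established in \autoref{result:aelliptic} (via Korn's inequality from \autoref{result:kornsh1normal}, or \autoref{result:kornsh1tangent} in the tangential case) continues to hold verbatim.

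Next I would define the solution operator $T_\rho$ exactly as in \autoref{eq:source} with $b$ replaced by $b_\rho$. Continuity of $T_\rho$ follows from the boundedness of $\rho$ together with Korn's inequality, and its compactness on $\hh^1_\n(\Omega;\Sigma)$ (resp.\ $\hh^1_\t(\Omega;\Sigma)$) follows from the compact embedding into $\ll^2(\Omega)$, exactly as before. The self-adjointness of $T_\rho$ with respect to the inner product induced by $a(\cdot,\cdot)$ still holds since $b_\rho$ remains symmetric. The Spectral Theorem then gives a countable family $\{\kappa_l\}$ with $\kappa_l \to 0$ and associated eigenfunctions, from which the eigenvalues of the weighted problem are recovered as $\omega_l^2 = 1/\kappa_l$; the shifted formulation \autoref{eq:shiftedweakform} is used identically in the case where rigid motions appear in the kernel.

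For the orthogonality statement, I would argue directly from the weak formulation: if $(\omega_j^2,\u_j)$ and $(\omega_k^2,\u_k)$ are eigenpairs with $\omega_j^2\neq\omega_k^2$, then $a(\u_j,\u_k) = \omega_j^2\, b_\rho(\u_j,\u_k) = \omega_k^2\, b_\rho(\u_j,\u_k)$, forcing $b_\rho(\u_j,\u_k) = 0$, i.e., orthogonality in the weighted inner product $(\rho\,\cdot,\cdot)_{0,\Omega}$. The main obstacle is essentially bookkeeping rather than a new mathematical difficulty: one must verify that no step in the proofs of \autoref{result:spectrum} and \autoref{result:jonestangent-spectrum} secretly relied on $\rho$ being constant (for instance, in identifying $b(\u,\u)$ with $\rho\|\u\|_{0,\Omega}^2$). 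The boundedness from below of $\rho$ is what allows every such invocation to be replaced by equivalence of norms, which is the only genuinely required ingredient beyond the constant-density case.
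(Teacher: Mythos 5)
Your argument is correct and takes essentially the same route the paper intends: the paper states this theorem without a written proof, relying on exactly your observation that only the form $b$ changes and that coercivity of $a$, compactness, self-adjointness, and the spectral theorem go through unchanged, with the weighted orthogonality following from the standard two-eigenvalue computation. One remark worth keeping: your added hypothesis $0<\rho_*\leq\rho\leq\rho^*$ a.e.\ is genuinely needed, since $\rho\in L^\infty(\Omega)$ alone (as the theorem is stated) does not make $b_\rho$ positive definite or norm-equivalent to the $\ll^2(\Omega)$ inner product, so your version is in fact more careful than the paper's.
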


\paragraph{Conclusions}
In this manuscript we have studied the properties of the so-called {\it Jones eigenvalue problem} on Lipschitz domains. To this end, a new Korn's inequality for smooth enough vector fields with vanishing normal trace was proved whenever the domain $\Omega$ is Lipschitz. We were able to show this inequality even in the case one assumes the boundary condition is only prescribed on a subset of the boundary with positive $(n-1)$-dimensional measure, $\Sigma$. However, in order to obtain the Korn's inequality for such vector fields one needs to make assumptions on the geometrical properties of $\Sigma$ (cf. \autoref{result:rmnormal}). A similar conclusion is provided for vector fields with a zero tangential trace on $\Sigma$; in this case the geometry of $\Sigma$ must be constrained differently (cf. \autoref{result:rmtangent}). For both cases of the Korn's inequality we are able to extend the inequality for vector fields in the Sobolev space $\ww^{1,p}(\Omega)$. These inequalities were utilized to show that the Jones eigenproblem possesses a countable spectrum on bounded Lipschitz domains. More generally, we considered the eigenproblem where the vanishing normal trace is assumed only on $\Sigma$; this case also has a countable set of eigenpairs for such class of domains. In addition, we proved that a variant of the Jones eigenproblem, where the zero tangential trace replaces the zero normal trace on $\Sigma$, also has a countable set of eigenpairs in $\hh^1_\s(\Omega;\Sigma)$. Finally, we see that the properties of the spectrum do not change when a variable density  elastic body is considered, with the orthogonality of the eigenfunctions associated with different eigenvalues established in the appropriated weighted inner product.


\paragraph{Acknowledgements}
Sebasti\'an Dom\'inguez thanks the support of CONICYT-Chile, through Becas Chile. Nilima Nigam thanks the support of 
NSERC through the Discovery program of Canada.

\bibliographystyle{plain}
\bibliography{references.bib}

\end{document}